\makeatletter \theoremstyle{plain}
\newtheorem{thm}{Theorem}[section]
\numberwithin{equation}{section}
\numberwithin{figure}{section} 
\theoremstyle{plain}
\newtheorem{cor}[thm]{Corollary} 
\theoremstyle{plain}
\newtheorem{lem}[thm]{Lemma} 
\theoremstyle{plain}
\theoremstyle{plain}
\begin{document}

\title{DOMAIN OF DIFFERENCE MATRIX OF ORDER ONE IN SOME SPACES OF DOUBLE SEQUENCES}

\author{serkan dem\.{I}r\.{I}z and osman duyar}
\subjclass[2000]{40C05, 46A45 } \keywords{Double sequence space,
Matrix transformations,}
\curraddr{Gaziosmanpa\c{s}a University, Faculty of Arts and Science \\
Department of Mathematics, Tokat-60240/TURKEY} \email{serkandemiriz@gmail.com\\
osman-duyar@hotmail.com}

\begin{abstract}
In this study, we define the spaces
$\mathcal{M}_{u}(\Delta),\mathcal{C}_{p}(\Delta),\mathcal{C}_{0p}(\Delta),
\mathcal{C}_{r}(\Delta)$ and $\mathcal{L}_{q}(\Delta)$ of double
sequences whose difference transforms are bounded , convergent in
the Pringsheim's sense, null in the Pringsheim's sense, both
convergent in the Pringsheim's sense and bounded, regularly
convergent and absolutely $q-$summable, respectively, and also
examine some inclusion relations related to those sequence spaces.
Furthermore, we show that these sequence spaces are Banach spaces  .
We determine the alpha-dual of the space $\mathcal{M}_{u}(\Delta)$
and the $\beta(v)-$dual of the space $\mathcal{C}_{\eta}(\Delta)$ of
double sequences, where $v,\eta\in \{p,bp,r\}$. Finally, we
characterize the classes $(\mu:\mathcal{C}_{v}(\Delta))$ for $v\in
\{p,bp,r\}$ of four dimensional matrix transformations, where $\mu$
is any given space of double sequences.
\end{abstract} \maketitle

\section{Introduction}

By $\omega$ and $\Omega$, we denote the sets of all real valued
single and double sequences which are the vector spaces with
coordinatewise addition and scalar multiplication. Any vector
subspaces of  $\omega$ and $\Omega$ are called as the \emph{single
sequence space} and \emph{double sequence space}, respectively. By
$\mathcal{M}_{u}$, we denote the space of all bounded double
sequences, that is
$$
\mathcal{M}_{u}:=\bigg\{x=(x_{mn})\in\Omega: \|x\|_{\infty}=
\sup_{m,n\in \mathbb{N}} |x_{mn}|<\infty \bigg\}
$$
which is a Banach space with the norm $\|x\|_{\infty}$; where
$\mathbb{N}$ denotes the set of  all positive integers. Consider a
sequence $x=(x_{mn})\in\Omega$. If for every $\varepsilon >0$ there
exists $n_{0}=n_{0}(\varepsilon)\in \mathbb{N}$ and $l\in
\mathbb{R}$ such that $|x_{mn}-l|<\varepsilon$ for all $m,n>n_{0}$
then we call that the double sequence $x$ is \emph{convergent} in
the \emph{Pringsheim's sense} to the limit $l$ and write $p-\lim
x_{mn}=l$; where $\mathbb{R}$ denotes the real field. By
$\mathcal{C}_{p}$, we denote the space of all convergent double
sequences in the Pringsheim's sense. It is well-known that there are
such sequences in the space $\mathcal{C}_{p}$ but not in the space
$\mathcal{M}_{u}$. Indeed following Boos \cite{boos}, if we define
the sequence $x=(x_{mn})$ by

\begin{eqnarray*}
x_{mn}:=\left\{
\begin{array}{rcl}
n&, & m=1, n\in \mathbb{N}, \\
0&,  &m\geq 2, n\in \mathbb{N}, \\
\end{array}
\right.
\end{eqnarray*}
then it is trivial that $x\in \mathcal{C}_{p}\backslash
\mathcal{M}_{u}$, since $p-\lim x_{mn}=0$ but
$\|x\|_{\infty}=\infty$. So, we can consider the space
$\mathcal{C}_{bp}$ of the double sequences which are both convergent
in the Pringsheim's sense and bounded, i.e.,
$\mathcal{C}_{bp}=\mathcal{C}_{p}\cap \mathcal{M}_{u}$. A sequence
in the space $\mathcal{C}_{p}$ is said to be \emph{regularly
convergent} if it is a single convergent sequence with respect to
each index and denote the set of all such sequences by
$\mathcal{C}_{r}.$ Also by $\mathcal{C}_{bp0}$ and
$\mathcal{C}_{r0}$, we denote the spaces of all double sequences
converging to $0$ contained in the sequence spaces
$\mathcal{C}_{bp}$ and $\mathcal{C}_{r}$, respectively. M\'{o}ricz
\cite{fm} proved that
$\mathcal{C}_{bp},\mathcal{C}_{bp0},\mathcal{C}_{r}$ and
$\mathcal{C}_{r0}$ are Banach spaces with the norm $\|.\|_{\infty}$.

Let us consider the isomorphism $T$ defined by Zeltser \cite{zel1}
as
\begin{eqnarray}
T&:&\Omega\rightarrow \omega\\
&&x\mapsto z=(z_{i}):=(x_{\varphi^{-1}(i)}),\nonumber
\end{eqnarray}
where $\varphi:\mathbb{N}\times \mathbb{N}\rightarrow \mathbb{N}$ is
a bijection defined by

\begin{eqnarray*}
\varphi [(1,1)]&=&1,\\
\varphi [(1,2)]&=&2, \varphi [(2,2)]=3, \varphi [(2,1)]=4, \\
&.& \\
&.& \\
&.& \\
\varphi [(1,n)]&=& (n-1)^{2}+1, \varphi [(2,n)]=(n-1)^{2}+2,...,\\
\varphi [(n,n)]&=& (n-1)^{2}+n, \varphi [(n,n-1)]=
n^{2}-n+2,...,\varphi [(n,1)]=n^{2},\\
&.& \\
&.& \\
&.& \\
\end{eqnarray*}

Let us consider a double sequence $x=(x_{mn})$ and define the
sequence $s=(s_{mn})$ which will be used throughout via $x$ by

\begin{equation}
s_{mn}:=\sum_{i=0}^{m}\sum_{j=0}^{n}x_{ij}
\end{equation}
for all $m,n\in \mathbb{N}$ . For the sake of brevity, here and in
what follows, we abbreviate the summation $\sum_{i=0}^{\infty}
\sum_{j=0}^{\infty}$ by $\sum_{i,j}$ and we use this abbreviation
with other letters. Let $\lambda$ be a space of a double sequences,
converging with respect to some linear convergence rule
$v-\lim:\lambda\rightarrow \mathbb{R}$. The sum of a double series
$\sum_{i,j} x_{ij}$ with respect to this rule is defined by
$v-\sum_{i,j} x_{ij}=v-\lim_{m,n\rightarrow \infty} s_{mn}$. Let
$\lambda,\mu$ be two spaces of double sequences, converging with
respect to the linear convergence rules $v_{1}-\lim$ and
$v_{2}-\lim$, respectively, and $A=(a_{mnkl})$ also be a four
dimensional infinite matrix over the real or complex field.

The \emph{$\alpha-$dual} $\lambda^{\alpha}$, \emph{$\beta(v)-$dual}
$\lambda^{\beta(v)}$ with respect to the $v-$convergence for $v\in
\{p,bp,r\}$ and the \emph{$\gamma-$dual} $\lambda^{\gamma}$ of a
double sequence space $\lambda$ are respectively defined by
\begin{eqnarray*}
\lambda^{\alpha}&:=&\bigg\{(a_{ij})\in \Omega: \sum_{i,j}
|a_{ij}x_{ij}|<\infty \quad \textrm{for all} \quad (x_{ij})\in
\lambda \bigg\},\\
\lambda^{\beta(v)}&:=&\bigg\{(a_{ij})\in \Omega: v-\sum_{i,j}
a_{ij}x_{ij} \quad \textrm{exists for all} \quad (x_{ij})\in \lambda
\bigg\},\\
\lambda^{\gamma}&:=&\bigg\{(a_{ij})\in \Omega: \sup_{k,l\in
\mathbb{N}} \bigg|\sum_{i,j=1}^{k,l}a_{ij}x_{ij}\bigg|<\infty \quad
\textrm{for all} \quad (x_{ij})\in \lambda \bigg\}.
\end{eqnarray*}
It is easy to see for any two spaces $\lambda,\mu$ of double
sequences that $\mu^{\alpha}\subset \lambda^{\alpha}$ whenever
$\lambda \subset \mu$ and $\lambda^{\alpha}\subset
\lambda^{\gamma}$. Additionally, it is known that the inclusion
$\lambda^{\alpha}\subset \lambda^{\beta(v)}$ holds while the
inclusion $\lambda^{\beta(v)} \subset \lambda^{\gamma}$ does not
hold, since the $v-$convergence of the sequence of partial sums of a
double series does not imply its boundedness.

The \emph{$v-$summability domain} $\lambda_{A}^{(v)}$  of a four
dimensional infinite matrix $A=(a_{mnkl})$ in a space $\lambda$ of a
double sequences is defined by
\begin{equation}\label{1.3}
\lambda_{A}^{(v)}=\bigg\{x=(x_{kl})\in \Omega:
Ax=\bigg(v-\sum_{k,l}a_{mnkl}x_{kl}\bigg)_{m,n\in \mathbb{N}} \
\textrm{exists and is in } \ \lambda \bigg\}.
\end{equation}
We say, with the notataion (\ref{1.3}), that $A$ maps the space
$\lambda$ into the space $\mu$ if and only if $Ax$ exists and is in
$\mu$ for all $x\in \lambda$ and denote the set of all four
dimensional matrices, transforming the space $\lambda$ into the
space $\mu$, by $(\lambda:\mu)$. It is trivial that for any matrix
$A\in (\lambda:\mu)$, $(a_{mnkl})_{k,l\in \mathbb{N}}$ is in the
$\beta(v)-$dual $\lambda^{\beta(v)}$ of the space $\lambda$ for all
$m,n\in \mathbb{N}$. An infinite matrix $A$ is said to be
\emph{$\mathcal{C}_{v}-$conservative} if $\mathcal{C}_{v}\subset
(\mathcal{C}_{v})_{A}$. Also by $(\lambda:\mu;p)$, we denote the
class of all four dimensional matrices $A=(a_{mnkl})$ in the class
$(\lambda:\mu)$ such that $v_{2}-\lim Ax=v_{1}-\lim x$ for all $x\in
\lambda$.

Now, following Zeltser \cite{zel2} we note the terminology for
double sequence spaces. A locally convex double sequence space
$\lambda$ is called a \emph{$DK-$space}, if all of the seminorms
$r_{kl}:\lambda\rightarrow \mathbb{R}$, $x=(x_{kl})\mapsto |x_{kl}|$
for all $k,l\in \mathbb{N}$ are continuous. A $DK-$space with a
Fr\'{e}chet topology is called an \emph{$FDK-$space}. A normed
$FDK-$space is called a \emph{$BDK-$space}. We record that $C_{r}$
endowed with the norm $\|.\|_{\infty}:\mathcal{C}_{r}\rightarrow
\mathbb{R}$, $x=(x_{kl})\mapsto \sup_{k,l\in \mathbb{N}} |x_{kl}|$
is a $BDK-$space.

Let us define the following sets of double sequences:
\begin{eqnarray*}
\mathcal{M}_{u}(t)&:=&\bigg\{(x_{mn})\in \Omega: \sup_{m,n\in
\mathbb{N}} |x_{mn}|^{t_{mn}}<\infty \bigg\},\\
\mathcal{C}_{p}(t)&:=&\bigg\{(x_{mn})\in \Omega: \exists l\in
\mathbb{C}\ni p-\lim_{m,n\rightarrow \infty}
|x_{mn}-l|^{t_{mn}}=0\bigg\},\\
\mathcal{C}_{0p}(t)&:=& \bigg\{(x_{mn})\in \Omega:
p-\lim_{m,n\rightarrow \infty} |x_{mn}|^{t_{mn}}=0\bigg\},\\
\mathcal{L}_{u}(t)&:=&\bigg\{(x_{mn})\in \Omega: \sum_{m,n}
|x_{mn}|^{t_{mn}}<\infty\bigg\}, \\
\mathcal{C}_{bp}(t)&:=&\mathcal{C}_{p}(t)\cap \mathcal{M}_{u}(t)
\quad  \textrm{and} \quad
\mathcal{C}_{0bp}(t):=\mathcal{C}_{0p}(t)\cap \mathcal{M}_{u}(t);
\end{eqnarray*}
where $t=(t_{mn})$ is the sequence of strictly positive reals
$t_{mn}$ for all $m,n\in \mathbb{N}$. In the case $t_{mn}=1$ for all
$m,n\in \mathbb{N}$;
$\mathcal{M}_{u}(t),\mathcal{C}_{p}(t),\mathcal{C}_{0p}(t),\mathcal{L}_{u}(t),\mathcal{C}_{bp}(t)$
and $\mathcal{C}_{0bp}(t)$ reduce to the sets
$\mathcal{M}_{u},\mathcal{C}_{p},\mathcal{C}_{0p},\mathcal{L}_{u},\mathcal{C}_{bp}$
and $\mathcal{C}_{0bp}$, respectively. Now, we can summarize the
knowledge given in some document related to the double sequence
spaces. Gökhan and Çolak \cite{agrc1,agrc2} have proved that
$\mathcal{M}_{u}(t),\mathcal{C}_{p}(t)$ and $\mathcal{C}_{bp}(t)$
are complete paranormed spaces of double sequences and gave the
alpha-, beta-, gamma-duals of the spaces $\mathcal{M}_{u}(t)$ and
$\mathcal{C}_{bp}(t)$. Quite recently, in her PhD thesis, Zeltser
\cite{zel1} has essentially studied  both the theory of topological
double sequence spaces and the theory of summability of double
sequences. Mursaleen and Edely \cite{mme} have introduced the
statistical convergence and statistical Cauchy for double sequences,
and gave the relation between statistically convergent and strongly
Ces\`{a}ro summable double sequences. Nextly, Mursaleen \cite{mm1}
and Mursaleen and Edely \cite{mme2} have defined the almost strong
regularity of matrices for double sequences and applied these
matrices to establish a core theorem and introduced the $M-$core for
double sequences and determined those four dimensional matrices
transforming every bounded double sequence $x=(x_{jk})$ into one
whose core is a subset of the $M-$core of $x$. More recently, Altay
and Ba\c{s}ar \cite{bafb1} have defined  the spaces
$\mathcal{BS},\mathcal{BS}(t), \mathcal{CS}_{bp}, \mathcal{CS}_{r}$
and $\mathcal{BV}$ of double series whose sequence of partial sums
are in the spaces $\mathcal{M}_{u}, \mathcal{M}_{u}(t),
\mathcal{C}_{p}, \mathcal{C}_{bp}, \mathcal{C}_{r}$ and
$\mathcal{L}_{u}$, respectively, and also examined some properties
of those sequence spaces and determined the alpha-duals of the
spaces $\mathcal{BS}, \mathcal{BV}, \mathcal{CS}_{bp}$ and the
$\beta (v)-$duals of the spaces $\mathcal{CS}_{bp}$ and
$\mathcal{CS}_{r}$ of double series. Quite recently, Ba\c{s}ar and
Sever \cite{fbys} have introduced the Banach space $\mathcal{L}_{q}$
of double sequences corresponding to the well-known space $\ell_{q}$
of absolutely $q-$summable single sequences and examine some
properties of the space $\mathcal{L}_{q}$. Furthermore, they
determine the $\beta(v)-$dual of the space and establish that the
alpha- and gamma-duals of the space $\mathcal{L}_{q}$ coincide with
the $\beta(v)-$dual; where
\begin{eqnarray*}
\mathcal{L}_{q}&:=&\bigg\{(x_{ij})\in \Omega: \sum_{i,j}
|x_{ij}|^{q}<\infty \bigg\}, \quad (1\leq q<\infty),\\
\mathcal{CS}_{v}&:=&\big\{(x_{ij})\in \Omega: (s_{mn})\in
\mathcal{C}_{v}\big\}.
\end{eqnarray*}
Here and after we assume that $v\in \{p,bp,r\}$.

The double difference matrix $\Delta=(\delta_{mnkl})$ of order one
is defined by

\begin{eqnarray*}
\delta_{mnkl}:=\left\{
\begin{array}{rcl}
(-1)^{m+n-k-l}&, & m-1\leq k\leq m, \quad n-1\leq l\leq n, \\
0&,  &\textrm{otherwise} \\
\end{array}
\right.
\end{eqnarray*}
for all $m,n,k,l\in \mathbb{N}$. Define the sequence $y=(y_{mn})$ as
the $\Delta-$transform of a sequence $x=(x_{mn})$, i.e.,
\begin{equation}
y_{mn}=(\Delta x)_{mn}=x_{mn}-x_{m,n-1}-x_{m-1,n}+x_{m-1,n-1}
\end{equation}
for all $m,n\in \mathbb{N}$. Additionally, a direct calculation
gives the inverse $\Delta^{-1}=S=(s_{mnkl})$ of the matrix $\Delta$
as follows:

\begin{eqnarray*}
s_{mnkl}:=\left\{
\begin{array}{rcl}
1&, &0\leq k\leq m, \quad 0\leq l\leq n, \\
0&,  &\textrm{otherwise} \\
\end{array}
\right.
\end{eqnarray*}
for all $m,n,k,l\in \mathbb{N}$.

In the present paper, we introduce the new double difference
sequence spaces $\mathcal{M}_{u}(\Delta), \mathcal{C}_{p}(\Delta),
\mathcal{C}_{0p}(\Delta)$ and $\mathcal{L}_{q}(\Delta)$, that is,
\begin{eqnarray*}
\mathcal{M}_{u}(\Delta)&:=&\big\{(x_{mn})\in \Omega: \sup_{m,n\in
\mathbb{N}} |y_{mn}|<\infty\big\},\\
\mathcal{C}_{p}(\Delta)&:=&\big\{(x_{mn})\in \Omega:
\exists l\in \mathbb{C}\ni p-\lim_{m,n\rightarrow \infty} |y_{mn}-l|=0\big\},\\
\mathcal{C}_{0p}(\Delta)&:=&\big\{(x_{mn})\in \Omega:
p-\lim_{m,n\rightarrow \infty} |y_{mn}|=0\big\},\\
\mathcal{L}_{q}(\Delta)&:=&\bigg\{(x_{ij})\in \Omega: \sum_{m,n}
|y_{mn}|^{q}<\infty\bigg\}, \quad (1\leq q<\infty).
\end{eqnarray*}
By $\mathcal{C}_{bp}(\Delta)$ and $\mathcal{C}_{r}(\Delta)$, we
denote the sets of all the $\Delta-$transforms convergent and
bounded, and the $\Delta-$transforms regularly convergent double
sequences. One can easily see that the spaces
$\mathcal{M}_{u}(\Delta), \mathcal{C}_{p}(\Delta),$
$\mathcal{C}_{0p}(\Delta),  \mathcal{C}_{bp}(\Delta),
\mathcal{C}_{r}(\Delta)$ and $\mathcal{L}_{q}(\Delta)$ are the
domain of the double difference matrix $\Delta$ in the spaces
$\mathcal{M}_{u}, \mathcal{C}_{p}, \mathcal{C}_{0p},
\mathcal{C}_{bp},$ $\mathcal{C}_{r}$ and $\mathcal{L}_{q}$,
respectively.

\section{Some new double difference sequence spaces}

In the present section, we deal with the sets
$\mathcal{M}_{u}(\Delta), \mathcal{C}_{p}(\Delta),$
$\mathcal{C}_{0p}(\Delta),  \mathcal{C}_{bp}(\Delta),
\mathcal{C}_{r}(\Delta)$ and $\mathcal{L}_{q}(\Delta)$ consisting of
the double sequences whose $\Delta-$transforms of order one are in
the spaces $\mathcal{M}_{u}, \mathcal{C}_{p}, \mathcal{C}_{0p},
\mathcal{C}_{bp},$ $\mathcal{C}_{r}$ and $\mathcal{L}_{q}$,
respectively.

\begin{thm}
The sets $\mathcal{M}_{u}(\Delta), \mathcal{C}_{p}(\Delta),$
$\mathcal{C}_{0p}(\Delta),  \mathcal{C}_{bp}(\Delta),
\mathcal{C}_{r}(\Delta)$ and $\mathcal{L}_{q}(\Delta)$ are the
linear spaces with the coordinatewise addition and scalar
multiplication, and $\mathcal{M}_{u}(\Delta),
\mathcal{C}_{p}(\Delta),$ $\mathcal{C}_{0p}(\Delta),
\mathcal{C}_{bp}(\Delta), \mathcal{C}_{r}(\Delta)$ and
$\mathcal{L}_{q}(\Delta)$ are the Banach spaces with the norms
\begin{equation}
\|x\|_{\mathcal{M}_{u}(\Delta)}=\sup_{m,n\in \mathbb{N}}
\big|x_{mn}+x_{m-1,n-1}-x_{m,n-1}-x_{m-1,n}\big|,
\end{equation}
\begin{equation}
\|x\|_{\mathcal{L}_{q}(\Delta)}=\bigg[\sum_{m,n}
|x_{mn}+x_{m-1,n-1}-x_{m,n-1}-x_{m-1,n}|^{q}\bigg]^{1/q}, \quad
(1\leq q<\infty).
\end{equation}
\end{thm}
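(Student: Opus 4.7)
The plan is to exploit the observation that the difference matrix $\Delta$ acts as a linear bijection on $\Omega$ with inverse $S$, so that for each classical space $\lambda\in\{\mathcal{M}_{u},\mathcal{C}_{p},\mathcal{C}_{0p},\mathcal{C}_{bp},\mathcal{C}_{r},\mathcal{L}_{q}\}$ the map $x\mapsto \Delta x$ identifies $\lambda(\Delta)$ with $\lambda$, pulling back the natural norm on $\lambda$ to give exactly the formulas displayed in the statement. This reduces linearity, the norm axioms, and completeness to already-recorded facts about $\lambda$.

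First I would check linearity: because $\Delta$ is linear and each $\lambda$ is a linear subspace of $\Omega$, the preimage $\lambda(\Delta)=\{x\in\Omega:\Delta x\in\lambda\}$ is automatically a linear subspace under coordinatewise operations. For the norm axioms on $\mathcal{M}_{u}(\Delta)$, positive homogeneity and the triangle inequality descend from $\|\cdot\|_\infty$ via the linearity of $\Delta$, while definiteness follows from the fact that $\Delta x=0$ forces $x=S(\Delta x)=0$ by the triangular (hence invertible) structure of $\Delta$. The same reasoning handles $\|\cdot\|_{\mathcal{L}_{q}(\Delta)}$, with the $\ell_q$-type norm on $\mathcal{L}_{q}$ replacing $\|\cdot\|_\infty$.

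For completeness I would take a Cauchy sequence $(x^{(i)})$ in $\mathcal{M}_{u}(\Delta)$. Since $\|x^{(i)}-x^{(j)}\|_{\mathcal{M}_{u}(\Delta)}=\|\Delta x^{(i)}-\Delta x^{(j)}\|_\infty$, the images $(\Delta x^{(i)})$ form a Cauchy sequence in the Banach space $(\mathcal{M}_{u},\|\cdot\|_\infty)$ and hence converge to some $y\in\mathcal{M}_{u}$. Setting $x:=Sy\in\Omega$ one has $\Delta x=y$, hence $x\in\mathcal{M}_{u}(\Delta)$ and $\|x^{(i)}-x\|_{\mathcal{M}_{u}(\Delta)}\to 0$. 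The identical argument works for $\mathcal{L}_{q}(\Delta)$ using that $\mathcal{L}_{q}$ is Banach (Ba\c{s}ar--Sever), and for $\mathcal{C}_{bp}(\Delta)$ and $\mathcal{C}_{r}(\Delta)$ once one invokes M\'oricz's theorem that $\mathcal{C}_{bp}$ and $\mathcal{C}_{r}$ are closed subspaces of $(\mathcal{M}_{u},\|\cdot\|_\infty)$, which guarantees that the limit $y$ lies in the appropriate subspace.

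The main subtlety I anticipate concerns $\mathcal{C}_{p}(\Delta)$ and $\mathcal{C}_{0p}(\Delta)$: because $\mathcal{C}_{p}$ is not contained in $\mathcal{M}_{u}$, these spaces can carry sequences whose $\Delta$-transforms are Pringsheim-convergent but not bounded, so the sup-formula displayed in the statement is not finite on them and the Banach-space claim as literally stated does not go through. I would interpret the assertion as tacitly restricting these spaces to their bounded parts (which coincide with $\mathcal{C}_{bp}(\Delta)$ and $\mathcal{C}_{0bp}(\Delta)$); on these restricted spaces the closed-subspace argument above delivers the conclusion verbatim.
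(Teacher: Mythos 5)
Your proof is correct and reaches the stated conclusion by a cleaner route than the paper's. The paper proves completeness only for $\mathcal{L}_{q}(\Delta)$ (declaring the other cases analogous) and does it by hand: from a Cauchy sequence $(x^{(r)})$ it passes to the Cauchy sequence $(y^{(r)})=(\Delta x^{(r)})$ in $\mathcal{L}_{q}$, extracts coordinatewise limits $y_{mn}$, and then re-derives norm convergence and the membership $y\in\mathcal{L}_{q}$ from scratch --- in effect re-proving the completeness of $\mathcal{L}_{q}$ inside the argument, and never explicitly exhibiting the limit point $x$ of the original sequence (it is tacitly $Sy$). You instead treat $x\mapsto\Delta x$ as a norm-preserving linear bijection of $\lambda(\Delta)$ onto $\lambda$ with explicit inverse $S$, so that linearity, the norm axioms (including definiteness, via invertibility of $\Delta$), and completeness all transfer directly from the known Banach-space structure of $\mathcal{M}_{u}$, $\mathcal{C}_{bp}$, $\mathcal{C}_{r}$ (M\'{o}ricz) and $\mathcal{L}_{q}$ (Ba\c{s}ar--Sever). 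This is shorter, uniform across all cases, and makes explicit the two points the paper leaves implicit; it is essentially the paper's own Theorem 2.2 promoted to the engine of the proof.

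You are also right to flag $\mathcal{C}_{p}(\Delta)$ and $\mathcal{C}_{0p}(\Delta)$: since $\mathcal{C}_{p}\not\subset\mathcal{M}_{u}$ (the paper's own Boos example), the displayed supremum is $+\infty$ on part of $\mathcal{C}_{p}(\Delta)$, so it is not a norm there and the theorem as literally stated fails for these two spaces; the paper's proof passes over this silently. Your repair --- restricting to the bounded parts, which coincide with $\mathcal{C}_{bp}(\Delta)$ and $\mathcal{C}_{0bp}(\Delta)$ --- is the natural fix.
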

\begin{proof}
The first part of the theorem is a routine verification. So, we omit
the detail.

Since the proof may be given for the spaces
$\mathcal{M}_{u}(\Delta), \mathcal{C}_{p}(\Delta),$
$\mathcal{C}_{0p}(\Delta),  \mathcal{C}_{bp}(\Delta)$ and
$\mathcal{C}_{r}(\Delta)$, to avoid the repetition of the similar
statements, we prove  the theorem only for the space
$\mathcal{L}_{q}(\Delta)$.

It is obvious that $\|x\|_{\mathcal{L}_{q}(\Delta)}=\|y\|_{q}$,
where $\|.\|_{q}$ is the norm on the space $\mathcal{L}_{q}$. Let
$x^{(r)}=\{x_{jk}^{(r)}\}$ be a Cauchy sequence in
$\mathcal{L}_{q}(\Delta)$. Then, $\{y^{(r)}\}_{r\in \mathbb{N}}$ is
a Cauchy sequence in $\mathcal{L}_{q}$, where
$y^{(r)}=\{y_{mn}^{(r)}\}_{m,n=0}^{\infty}$ with
$$
y_{mn}^{(r)}=x_{mn}^{(r)}+x_{m-1,n-1}^{(r)}-x_{m,n-1}^{(r)}-x_{m-1,n}^{(r)}
$$
for all $m,n,r\in \mathbb{N}$. Then, for a given $\varepsilon>0$,
there is a positive integer $N=N(\varepsilon)$ such that
\begin{equation}\label{2.3}
\|y^{(r)}-y^{(s)}\|_{q}=\bigg\{\sum_{m,n}
\big|y_{mn}^{(r)}-y_{mn}^{(s)}\big|^{q}\bigg\}^{1/q}<\varepsilon
\end{equation}
for all $r,s>N$. Therefore
$\big|y_{mn}^{(r)}-y_{mn}^{(s)}\big|<\varepsilon$, i.e.
$\{y_{mn}^{(r)}\}_{r\in \mathbb{N}}$ is a Cauchy sequence in
$\mathbb{C}$, and hence converges in $\mathbb{C}$. Say,
\begin{equation}\label{2.4}
\lim_{r\rightarrow \infty}y_{mn}^{(r)}=y_{mn}.
\end{equation}
Using these infinitely many limits, we define the sequence
$y=(y_{mn})_{m,n=0}^{\infty}$. Then, we get by (\ref{2.4}) that
\begin{equation}\label{2.5}
\lim_{r\rightarrow \infty}
\|y_{mn}^{(r)}-y_{mn}\|_{\mathcal{L}_{q}(\Delta)}=\lim_{r\rightarrow
\infty}\bigg\{\sum_{m,n}\big|y_{mn}-y_{mn}^{(r)}\big|^{q}\bigg\}^{1/q}=0.
\end{equation}
Now, we have to show that $y\in \mathcal{L}_{q}$. Since
$y^{(r)}=\{y_{mn}^{(r)}\}_{m,n=0}^{\infty}\in \mathcal{L}_{q}$ and
by (\ref{2.5})
$$
\bigg\{\sum_{m,n}|y_{mn}|^{q}\bigg\}^{1/q}\leq
\bigg\{\sum_{m,n}|y_{mn}-y_{mn}^{(r)}|^{q}\bigg\}^{1/q}+\bigg\{\sum_{m,n}|y_{mn}^{(r)}|^{q}\bigg\}^{1/q}<
\infty,
$$
which shows that $y=(y_{mn})_{m,n=0}^{\infty}\in \mathcal{L}_{q}$,
i.e. $x=(x_{jk})\in \mathcal{L}_{q}(\Delta)$. Since
$\{x^{(r)}\}_{r\in \mathbb{N}}$ was arbitrary Cauchy sequence in
$\mathcal{L}_{q}(\Delta)$, the double difference sequence space
$\mathcal{L}_{q}(\Delta)$ is complete. This completes the proof of
the theorem.
\end{proof}

\begin{thm}\label{t2.2}
The space $\lambda (\Delta)$ is linearly isomorphic to the space
$\lambda$, where $\lambda$ denotes any of the spaces
$\mathcal{M}_{u}, \mathcal{C}_{p}, \mathcal{C}_{0p},
\mathcal{C}_{bp}, \mathcal{C}_{r}$ and $\mathcal{L}_{q}$.
\end{thm}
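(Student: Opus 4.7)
The plan is to exhibit the map $T_\Delta:\lambda(\Delta)\to\lambda$ defined by $T_\Delta x=y=\Delta x$ explicitly, i.e.\ $(T_\Delta x)_{mn}=x_{mn}-x_{m,n-1}-x_{m-1,n}+x_{m-1,n-1}$ for all $m,n\in\mathbb{N}$, with the convention that $x_{mn}=0$ whenever $m=-1$ or $n=-1$. I will then verify the four properties: linearity, injectivity, surjectivity, and norm preservation.

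Linearity of $T_\Delta$ is immediate, since $\Delta$ acts componentwise as a fixed linear combination of shifts. For injectivity, if $T_\Delta x=0$ then $y_{mn}=0$ for all $m,n$; applying the inverse matrix $\Delta^{-1}=S=(s_{mnkl})$ recorded in the introduction gives $x_{mn}=\sum_{i=0}^{m}\sum_{j=0}^{n}y_{ij}=0$, so $x=0$. (This implicitly uses the boundary convention above; indeed, plugging $m=n=0$ into $y_{mn}$ forces $x_{00}=0$, then an induction on $m+n$ using the defining relation $y_{mn}=x_{mn}-x_{m,n-1}-x_{m-1,n}+x_{m-1,n-1}$ propagates the vanishing.)

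For surjectivity, given any $y=(y_{mn})\in\lambda$, define $x=Sy$ by $x_{mn}:=\sum_{i=0}^{m}\sum_{j=0}^{n}y_{ij}$. A direct telescoping calculation shows
\begin{equation*}
x_{mn}-x_{m,n-1}-x_{m-1,n}+x_{m-1,n-1}=y_{mn}\qquad(m,n\in\mathbb{N}),
\end{equation*}
so $T_\Delta x=y\in\lambda$, which by definition of $\lambda(\Delta)$ means $x\in\lambda(\Delta)$ and $T_\Delta x=y$. Thus $T_\Delta$ is a bijection with inverse $T_\Delta^{-1}=S$. Finally, the norms are preserved on the nose: by construction $\|x\|_{\lambda(\Delta)}=\|y\|_\lambda$, where $\|\cdot\|_\lambda$ is the $\|\cdot\|_\infty$ norm for $\lambda\in\{\mathcal{M}_u,\mathcal{C}_p,\mathcal{C}_{0p},\mathcal{C}_{bp},\mathcal{C}_r\}$ and the $\|\cdot\|_q$ norm for $\lambda=\mathcal{L}_q$. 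Hence $T_\Delta$ is a linear isometric isomorphism, a fortiori a linear isomorphism.

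The only mildly delicate point is bookkeeping for the two boundary rows $m=0$ and $n=0$, where three of the four summands defining $y_{mn}$ are declared zero. This is what pins down the inverse $S$ uniquely and makes surjectivity work without a free parameter; once the convention is fixed, the verification that $\Delta S=S\Delta=I$ on $\Omega$ reduces to two one-line telescoping identities. No deeper machinery is needed, and the same proof covers all six choices of $\lambda$ uniformly, since membership in $\lambda(\Delta)$ is defined precisely by $\Delta x\in\lambda$.
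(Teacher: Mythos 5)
Your proposal is correct and follows essentially the same route as the paper: both take the map $x\mapsto y=\Delta x$ as the isomorphism, invert it via the summation matrix $S$ from the introduction (i.e.\ $x_{jk}=\sum_{m=0}^{j}\sum_{n=0}^{k}y_{mn}$), and observe that the norm is preserved by construction. Your version is somewhat more careful about the boundary convention $x_{-1,n}=x_{m,-1}=0$ and about spelling out injectivity, but these are refinements of the paper's argument rather than a different approach.
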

\begin{proof}
We show here that $\mathcal{M}_{u}(\Delta)$ is linearly isomorphic
to $\mathcal{M}_{u}$. Consider the transformation $T$ from
$\mathcal{M}_{u}(\Delta)$ to $\mathcal{M}_{u}$ defined by
$x=(x_{jk})\mapsto y=(y_{mn})$. Then, clearly $T$ is linear and
injective. Now, define the sequence $x=(x_{jk})$ by
\begin{equation}\label{2.6}
x_{jk}=\sum_{m=0}^{j}\sum_{n=0}^{k}y_{mn}
\end{equation}
for all $j,k\in \mathbb{N}$. Suppose that $y\in \mathcal{M}_{u}$.
Then, since
\begin{eqnarray*}
\|x\|_{\mathcal{M}_{u}(\Delta)}&=&\sup_{j,k\in \mathbb{N}}
\bigg|\sum_{m=0}^{j}\sum_{n=0}^{k}y_{mn}\bigg|\\
&=&\sup_{j,k\in \mathbb{N}} |y_{jk}|=\|y\|_{\infty}<\infty,
\end{eqnarray*}
$x=(x_{jk})$ defined by (\ref{2.6}) is in the space
$\mathcal{M}_{u}(\Delta)$. Hence, $T$ is surjective and norm
preserving. This completes the proof of the theorem.
\end{proof}

Now, we give some inclusion relations between the double difference
sequence spaces.
\begin{thm}\label{t2.3}
$\mathcal{M}_{u}$ is the subspace of the space
$\mathcal{M}_{u}(\Delta)$.
\end{thm}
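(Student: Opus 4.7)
The plan is to verify the set-theoretic inclusion $\mathcal{M}_{u} \subset \mathcal{M}_{u}(\Delta)$, since both spaces are already known to be linear (by the previous theorem for $\mathcal{M}_{u}(\Delta)$ and by the standard theory for $\mathcal{M}_{u}$), so an inclusion of underlying sets automatically upgrades to an inclusion of linear subspaces with respect to coordinatewise addition and scalar multiplication.

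Concretely, I would fix an arbitrary $x=(x_{mn})\in \mathcal{M}_{u}$, set $M:=\|x\|_{\infty}=\sup_{m,n}|x_{mn}|<\infty$, and form its $\Delta$-transform
\[
y_{mn}=x_{mn}-x_{m,n-1}-x_{m-1,n}+x_{m-1,n-1},
\]
with the convention $x_{m,-1}=x_{-1,n}=x_{-1,-1}=0$ that is implicit in the definition of $\Delta$. A single application of the triangle inequality then gives $|y_{mn}|\leq 4M$ for every $m,n\in\mathbb{N}$, so that
\[
\|x\|_{\mathcal{M}_{u}(\Delta)}=\sup_{m,n\in\mathbb{N}}|y_{mn}|\leq 4\|x\|_{\infty}<\infty,
\]
which places $x$ in $\mathcal{M}_{u}(\Delta)$.

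There is essentially no obstacle here; the only thing that deserves a remark is the boundary convention for the indices $m=0$ or $n=0$, which must be handled consistently with the definition of the matrix $\Delta=(\delta_{mnkl})$ given in the introduction, and the bookkeeping that $\mathcal{M}_{u}$ is closed under the linear operations inherited from $\Omega$, so it really is a linear subspace (not merely a subset) of $\mathcal{M}_{u}(\Delta)$. The estimate $\|x\|_{\mathcal{M}_{u}(\Delta)}\leq 4\|x\|_{\infty}$ that drops out of the proof also shows, as a bonus, that the inclusion map $\mathcal{M}_{u}\hookrightarrow \mathcal{M}_{u}(\Delta)$ is continuous.
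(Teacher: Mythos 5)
Your argument is correct and follows essentially the same route as the paper's own proof: both fix $x\in\mathcal{M}_{u}$ with $\sup_{m,n}|x_{mn}|\leq K$ and apply the triangle inequality to the four terms of $(\Delta x)_{mn}$ to obtain $\|x\|_{\mathcal{M}_{u}(\Delta)}\leq 4K<\infty$. The paper additionally exhibits the sequence $x_{mn}=mn$ to show the inclusion is strict, but since the theorem as stated only asserts the subspace inclusion, your omission of such an example is not a gap.
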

\begin{proof}
 Let us take $x=(x_{mn})\in
\mathcal{M}_{u}$. Then, there exists an $K$ such that
$$
\sup_{m,n\in \mathbb{N}} |x_{mn}|\leq K
$$
for all $m,n\in \mathbb{N}$, one can observe that

\begin{eqnarray}
\big|(\Delta x)_{mn}\big|&=&\big|x_{mn}-x_{m,n-1}-x_{m-1,n}+x_{m-1,n-1}\big| \nonumber\\
&\leq&|x_{mn}|+|x_{m,n-1}|+|x_{m-1},n|+|x_{m-1,n-1}| \label{1}.
\end{eqnarray}
Then, we see by taking supremum over $m,n\in \mathbb{N}$ in
(\ref{1}) that $\|x\|_{\infty}\leq 4K$, i.e., $x\in
\mathcal{M}_{u}(\Delta)$.

Now, we see that the inclusion is strict. Let $x=(x_{mn})$ be
defined by $$x_{mn}=mn$$ for all $m,n\in \mathbb{N}$. Then the
sequence is in $x\in \mathcal{M}_{u}(\Delta)\backslash
\mathcal{M}_{u}$. This completes the proof of the theorem.
\end{proof}

\begin{lem}\cite[Theorem 1.2]{bafb1}\label{l2.4}
$\mathcal{L}_{u}\subset \mathcal{BS}\subset \mathcal{M}_{u}$
strictly hold.
\end{lem}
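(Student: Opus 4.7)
The plan is to split the chain into its two separate inclusions, prove each one by a direct estimate on the partial sums, and then exhibit explicit double sequences witnessing both strictness claims. For $\mathcal{L}_{u}\subset \mathcal{BS}$, I would take $x=(x_{ij})\in \mathcal{L}_{u}$ and bound its partial sums uniformly by
\[
|s_{mn}|\leq \sum_{i=0}^{m}\sum_{j=0}^{n}|x_{ij}|\leq \sum_{i,j}|x_{ij}|<\infty,
\]
which forces $(s_{mn})\in \mathcal{M}_{u}$, that is, $x\in \mathcal{BS}$. For $\mathcal{BS}\subset \mathcal{M}_{u}$, I would invert the double summation via the four-corner identity
\[
x_{mn}=s_{mn}-s_{m-1,n}-s_{m,n-1}+s_{m-1,n-1}
\]
(with the convention $s_{-1,n}=s_{m,-1}=0$), so that the triangle inequality immediately gives $|x_{mn}|\leq 4\,\|(s_{mn})\|_{\infty}$, whence $x\in \mathcal{M}_{u}$. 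Note that this is essentially the one-dimensional argument applied in both coordinates, and it is the same telescoping that underlies $\Delta S=I$ as recorded in the excerpt.

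For strictness I would exhibit one explicit witness in each gap. Define $x$ by $x_{0n}=(-1)^{n}$ for $n\in \mathbb{N}$ and $x_{mn}=0$ for $m\geq 1$; then every partial sum $s_{mn}=\sum_{j=0}^{n}(-1)^{j}$ is either $0$ or $1$, so $x\in \mathcal{BS}$, whereas $\sum_{m,n}|x_{mn}|=\sum_{n}1=\infty$, so $x\notin \mathcal{L}_{u}$. Conversely, the constant sequence $x_{mn}\equiv 1$ clearly lies in $\mathcal{M}_{u}$ but its partial sums $s_{mn}=(m+1)(n+1)$ are unbounded, so it does not lie in $\mathcal{BS}$.

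I do not foresee a genuine obstacle to the argument: the only computational input is the four-corner telescoping identity, which follows by direct expansion, and the only care required is in checking that the witnesses satisfy the exact defining conditions of $\mathcal{BS}$ and $\mathcal{L}_{u}$ rather than, say, any convergence requirement. Since the definition of $\mathcal{BS}$ demands only boundedness of the partial sums, the alternating-row example passes this test trivially, and both inclusions together with the two witnesses complete the strict chain.
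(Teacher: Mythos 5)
Your proof is correct in every step: the uniform bound $|s_{mn}|\leq\sum_{i,j}|x_{ij}|$ gives the first inclusion, the four-corner telescoping identity $x_{mn}=s_{mn}-s_{m-1,n}-s_{m,n-1}+s_{m-1,n-1}$ gives the second, and both witnesses (the alternating first row, and the constant sequence $1$) do exactly what you claim. Note, however, that the paper itself offers no proof of this statement at all --- it is imported verbatim as Theorem~1.2 of Altay and Ba\c{s}ar \cite{bafb1} --- so there is nothing internal to compare your argument against; your write-up simply supplies the elementary, self-contained verification that the paper delegates to the reference.
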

\begin{lem}\cite[Theorem 2.9]{bafb1}\label{l2.5}
Let $v\in \{p,bp,r\}$. Then, the inclusion $\mathcal{BV}\subset
\mathcal{C}_{v}$ and $\mathcal{BV}\subset \mathcal{M}_{u}$ strictly
hold.
\end{lem}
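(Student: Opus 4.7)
The plan is to unpack the definition of $\mathcal{BV}$ from the introduction, exploit absolute summability of the partial sums to obtain both boundedness and regular convergence of the underlying double sequence, and then exhibit a single counterexample witnessing strictness of both inclusions. Recall that $\mathcal{BV}$ consists of those $(x_{ij})\in\Omega$ whose partial-sum sequence $s_{mn}=\sum_{i=0}^{m}\sum_{j=0}^{n}x_{ij}$ satisfies $\sum_{m,n}|s_{mn}|<\infty$. Throughout I shall use the inversion identity $x_{mn}=s_{mn}-s_{m-1,n}-s_{m,n-1}+s_{m-1,n-1}$, with the convention $s_{mn}=0$ whenever $m<0$ or $n<0$.

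First I would handle the inclusion $\mathcal{BV}\subset\mathcal{M}_u$. Absolute summability of the double series $\sum_{m,n}|s_{mn}|$ forces $M:=\sup_{m,n}|s_{mn}|<\infty$, and the inversion identity gives $|x_{mn}|\leq 4M$ at once, so $x\in\mathcal{M}_u$.

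Next I would upgrade this to $\mathcal{BV}\subset\mathcal{C}_r$, which automatically subsumes the analogous statements for $\mathcal{C}_p$ and $\mathcal{C}_{bp}$ via the standard chain $\mathcal{C}_r\subset\mathcal{C}_{bp}\subset\mathcal{C}_p$. The crucial observation is that absolute summability forces regular convergence of $s$ to zero: for each fixed $n$, $\sum_m|s_{mn}|\leq\sum_{m,n}|s_{mn}|<\infty$ implies $s_{mn}\to 0$ as $m\to\infty$; the symmetric argument handles fixed $m$; and the double-sum finiteness also yields the Pringsheim limit zero. Hence $s\in\mathcal{C}_{r0}$, and substituting into the inversion identity shows each of the four terms defining $x_{mn}$ tends to zero in the regular sense, so $x\in\mathcal{C}_{r0}\subset\mathcal{C}_r$.

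For strictness, the constant sequence $x_{mn}\equiv 1$ witnesses both: it is bounded and regularly convergent to $1$, hence lies in $\mathcal{M}_u\cap\mathcal{C}_r$, yet its partial sums $s_{mn}=(m+1)(n+1)$ satisfy $\sum_{m,n}|s_{mn}|=\infty$, placing it outside $\mathcal{BV}$. The only step I expect to need some care is the passage from absolute summability of $\sum_{m,n}|s_{mn}|$ to the individual row and column limits required for regular convergence; this is a routine Fubini-type domination, but worth spelling out in order not to conflate double and iterated notions of convergence.
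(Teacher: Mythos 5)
The paper never proves this lemma: it is imported verbatim as \cite[Theorem 2.9]{bafb1}, so there is no internal argument to compare yours against and I can only judge your proof on its own terms. The structural skeleton you chose (dominate to get boundedness, pass to row, column and Pringsheim limits to get regular convergence, then one counterexample serving all the strict inclusions) is the right one, and your inclusion arguments are sound \emph{for the definition of $\mathcal{BV}$ you adopt}. The difficulty is that definition. You took the introduction's phrase ``double series whose sequence of partial sums are in $\ldots\,\mathcal{L}_u$'' literally, i.e.\ $\mathcal{BV}=\{x:\sum_{m,n}|s_{mn}|<\infty\}$. That sentence is a blanket description that is accurate for $\mathcal{BS}$ and $\mathcal{CS}_v$ but reversed for $\mathcal{BV}$: in \cite{bafb1} the space $\mathcal{BV}$ is the bounded-variation space $\{x\in\Omega:\sum_{k,l}|x_{kl}-x_{k-1,l}-x_{k,l-1}+x_{k-1,l-1}|<\infty\}$, that is, the set of sequences which \emph{are} the partial-sum arrays of series with terms in $\mathcal{L}_u$ (in the present paper's notation, $\mathcal{BV}=\mathcal{L}_1(\Delta)$). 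The inclusions still hold under the correct definition, by the mirror image of your estimate: putting $y=\Delta x\in\mathcal{L}_u$ one has $x_{mn}=\sum_{i\le m}\sum_{j\le n}y_{ij}$, hence $|x_{mn}|\le\sum_{i,j}|y_{ij}|$ and $x$ converges regularly to $\sum_{i,j}y_{ij}$; this is the same Fubini-type domination you describe, applied one level up.

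Where the proof genuinely breaks is the strictness witness. Under the actual definition the constant sequence $x\equiv 1$ \emph{belongs} to $\mathcal{BV}$: its difference transform equals $1$ at $(0,0)$ and $0$ elsewhere, which is certainly absolutely summable, so your example witnesses nothing. What is needed is a bounded, regularly convergent sequence of infinite variation, for instance $x_{mn}=\sum_{k=0}^{m}(-1)^{k}/(k+1)$ (independent of $n$): it lies in $\mathcal{C}_r\cap\mathcal{M}_u$ with regular limit $\ln 2$, while $(\Delta x)_{mn}$ vanishes for $n\ge 1$ and equals $(-1)^m/(m+1)$ for $n=0$, $m\ge1$, so $\sum_{m,n}|(\Delta x)_{mn}|$ diverges like the harmonic series and $x\notin\mathcal{BV}$. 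With that replacement (and the roles of $x$ and $s$ exchanged throughout), your argument becomes a correct proof of the cited result.
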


Combining Lemma \ref{l2.4}, Lemma \ref{l2.5} and Theorem \ref{t2.3},
we get the following corollaries.

\begin{cor}
The inclusion $\mathcal{L}_{u}\subset \mathcal{BS}\subset
\mathcal{M}_{u}(\Delta)$ strictly hold.
\end{cor}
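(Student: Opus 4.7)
The plan is to chain together Lemma \ref{l2.4} with Theorem \ref{t2.3}. Lemma \ref{l2.4} already supplies the first two strict inclusions $\mathcal{L}_{u}\subset \mathcal{BS}\subset \mathcal{M}_{u}$, so nothing fresh is needed on that side. All that remains is to bridge from $\mathcal{M}_{u}$ to $\mathcal{M}_{u}(\Delta)$, which is exactly the content of Theorem \ref{t2.3}: we have $\mathcal{M}_{u}\subset \mathcal{M}_{u}(\Delta)$, and combining this with $\mathcal{BS}\subset \mathcal{M}_{u}$ yields $\mathcal{BS}\subset \mathcal{M}_{u}(\Delta)$.

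For the strictness of the outer inclusion $\mathcal{BS}\subset \mathcal{M}_{u}(\Delta)$, I would exhibit a sequence lying in $\mathcal{M}_{u}(\Delta)$ but outside $\mathcal{BS}$. The example $x_{mn}=mn$ used in Theorem \ref{t2.3} already does the job: it was shown there to belong to $\mathcal{M}_{u}(\Delta)\setminus \mathcal{M}_{u}$, and since by Lemma \ref{l2.4} we have $\mathcal{BS}\subset \mathcal{M}_{u}$, such a sequence cannot lie in $\mathcal{BS}$ either. The strictness of $\mathcal{L}_{u}\subset \mathcal{BS}$ is inherited verbatim from Lemma \ref{l2.4}.

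There is no real obstacle here — the corollary is essentially a bookkeeping statement obtained by pasting the two already-established strict inclusion chains together. The only point that deserves a line of care is making sure that the witness for strictness of $\mathcal{BS}\subsetneq \mathcal{M}_{u}(\Delta)$ is justified via the containment $\mathcal{BS}\subset \mathcal{M}_{u}$ rather than re-verified from scratch, so that no new computation is required.
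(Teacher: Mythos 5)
Your proposal is correct and matches the paper's intent exactly: the paper gives no separate proof, simply noting that the corollary follows by combining Lemma \ref{l2.4} with Theorem \ref{t2.3}, which is precisely your chaining argument. Your extra remark that the witness $x_{mn}=mn$ from Theorem \ref{t2.3} also certifies strictness of $\mathcal{BS}\subset\mathcal{M}_{u}(\Delta)$ (via $\mathcal{BS}\subset\mathcal{M}_{u}$) is a sound and welcome addition.
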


\begin{cor}
The inclusion $\mathcal{BV}\subset \mathcal{M}_{u}(\Delta)$ strictly
holds.
\end{cor}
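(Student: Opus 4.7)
The plan is to obtain the inclusion by transitivity from the two results already in hand and to establish strictness by recycling the witness sequence that appeared in the proof of Theorem \ref{t2.3}. Specifically, Lemma \ref{l2.5} gives $\mathcal{BV}\subset \mathcal{M}_{u}$ (strictly), and Theorem \ref{t2.3} gives $\mathcal{M}_{u}\subset \mathcal{M}_{u}(\Delta)$ (also strictly). Chaining these two inclusions immediately yields $\mathcal{BV}\subset \mathcal{M}_{u}(\Delta)$, which is the containment half of the claim.

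For the strictness, the cleanest move is to reuse the sequence $x=(x_{mn})$ defined by $x_{mn}=mn$ that was exhibited in the proof of Theorem \ref{t2.3}. A direct computation for $m,n\geq 1$ gives $(\Delta x)_{mn}=mn-m(n-1)-(m-1)n+(m-1)(n-1)=1$, so $\Delta x$ is bounded and hence $x\in \mathcal{M}_{u}(\Delta)$; on the other hand $\sup_{m,n}|x_{mn}|=\sup_{m,n}mn=\infty$, so $x\notin \mathcal{M}_{u}$. Since $\mathcal{BV}\subset \mathcal{M}_{u}$ by Lemma \ref{l2.5}, membership in $\mathcal{BV}$ would force membership in $\mathcal{M}_{u}$, so a fortiori $x\notin \mathcal{BV}$. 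This single sequence therefore witnesses $\mathcal{BV}\subsetneq \mathcal{M}_{u}(\Delta)$.

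There is essentially no obstacle here: both ingredients (Lemma \ref{l2.5} and Theorem \ref{t2.3}) do all the heavy lifting, and the witness computation reduces to a one-line telescoping. The only thing worth being careful about is the boundary convention for $\Delta$ at $m=0$ or $n=0$, but since the supremum defining $\mathcal{M}_{u}(\Delta)$ is taken only over entries of $\Delta x$, and the interior entries are already constant equal to $1$, this has no effect on the argument. I would therefore present the proof as a two-sentence corollary: inclusion by transitivity from Lemma \ref{l2.5} and Theorem \ref{t2.3}, and strictness via the sequence $x_{mn}=mn$.
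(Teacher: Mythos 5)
Your proposal is correct and follows exactly the route the paper intends: the corollary is stated there as an immediate consequence of combining Lemma \ref{l2.5} ($\mathcal{BV}\subset\mathcal{M}_{u}$ strictly) with Theorem \ref{t2.3} ($\mathcal{M}_{u}\subset\mathcal{M}_{u}(\Delta)$ strictly), with no further argument given. Your explicit strictness witness $x_{mn}=mn$, lying in $\mathcal{M}_{u}(\Delta)\setminus\mathcal{M}_{u}$ and hence outside $\mathcal{BV}$, is a valid (and slightly more detailed) justification of the same chain.
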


\begin{thm}
The inclusion  $\mathcal{L}_{q} \subset \mathcal{L}_{q}(\Delta)$
strictly holds; where $1\leq q<\infty$.
\end{thm}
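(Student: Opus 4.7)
The plan is to prove the inclusion and then exhibit a witness for strictness.

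For the inclusion $\mathcal{L}_{q}\subset \mathcal{L}_{q}(\Delta)$, I take an arbitrary $x=(x_{mn})\in \mathcal{L}_{q}$ and estimate the $q$-norm of its $\Delta$-transform $y_{mn}=x_{mn}-x_{m,n-1}-x_{m-1,n}+x_{m-1,n-1}$ (with the convention that $x_{mn}=0$ whenever $m<0$ or $n<0$, which is what makes the inverse $S=\Delta^{-1}$ in the excerpt well defined). First I would apply the triangle inequality termwise, exactly as in the bound (\ref{1}) of Theorem \ref{t2.3}, to get
\begin{equation*}
|y_{mn}|\leq |x_{mn}|+|x_{m,n-1}|+|x_{m-1,n}|+|x_{m-1,n-1}|.
\end{equation*}
Then I would raise to the $q$-th power using the elementary convexity inequality $(a+b+c+d)^{q}\leq 4^{q-1}(a^{q}+b^{q}+c^{q}+d^{q})$ for $q\geq 1$, sum over $m,n\in \mathbb{N}$, and observe that each of the four resulting sums is bounded by $\sum_{m,n}|x_{mn}|^{q}$ after a harmless index shift. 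This gives $\|x\|_{\mathcal{L}_{q}(\Delta)}^{q}\leq 4^{q}\|x\|_{q}^{q}<\infty$, hence $x\in \mathcal{L}_{q}(\Delta)$.

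For the strictness I would display an explicit $x\in \mathcal{L}_{q}(\Delta)\setminus \mathcal{L}_{q}$. The natural candidate is the constant sequence $x_{mn}=1$ for all $m,n$. A direct computation using the boundary convention shows $y_{00}=1$ while $y_{mn}=0$ whenever $m\geq 1$ or $n\geq 1$ (the four terms cancel in pairs or individually vanish), so $\sum_{m,n}|y_{mn}|^{q}=1$ and $x\in \mathcal{L}_{q}(\Delta)$. On the other hand $\sum_{m,n}|x_{mn}|^{q}=\infty$, so $x\notin \mathcal{L}_{q}$.

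There is no real obstacle here; the only point that requires care is the bookkeeping for the boundary indices $m=0$ or $n=0$ when verifying both the norm estimate in the first part (the shifted sums need the convention $x_{-1,n}=x_{m,-1}=0$ to coincide with subsums of $\sum_{m,n}|x_{mn}|^{q}$) and the vanishing of the $\Delta$-transform in the second part. Once this convention is fixed consistently with the definition of $\Delta$ given in the introduction, both steps are straightforward.
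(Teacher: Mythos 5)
Your proof is correct and follows essentially the same route as the paper: the inclusion comes from the same constant-$4$ norm estimate (the paper simply asserts $\|x\|_{\mathcal{L}_{q}(\Delta)}\leq 4\|x\|_{\mathcal{L}_{q}}$ via the triangle inequality in $\mathcal{L}_{q}$, where you derive it more explicitly from $(a+b+c+d)^{q}\leq 4^{q-1}(a^{q}+b^{q}+c^{q}+d^{q})$), and strictness is shown by exhibiting a sequence outside $\mathcal{L}_{q}$ whose $\Delta$-transform is finitely supported. The paper's witness is $x_{mn}=1$ for $n=0$ and $x_{mn}=0$ otherwise rather than your constant sequence, but both work for exactly the same reason.
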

\begin{proof}
The prove the validity of the inclusion $\mathcal{L}_{q}\subset
\mathcal{L}_{q}(\Delta)$ for $1\leq q<\infty$, it suffices to show
the existence of a number $K>0$ such that
$$
\|x\|_{\mathcal{L}_{q}(\Delta)}\leq K\|x\|_{\mathcal{L}_{q}}
$$
for every $x\in \mathcal{L}_{q}$. Let $x\in \mathcal{L}_{q}$ and
$1\leq q<\infty$. Then, we obtain
\begin{eqnarray*}
\|x\|_{\mathcal{L}_{q}(\Delta)}&=&\bigg\{\sum_{m,n}
|x_{mn}-x_{m,n-1}-x_{m-1,n}+x_{m-1,n-1}|^{q}\bigg\}^{1/q}\\
&\leq& 4\|x\|_{\mathcal{L}_{q}}.
\end{eqnarray*}
This shows that the inclusion $\mathcal{L}_{q}\subset
\mathcal{L}_{q}(\Delta)$ holds.

Additionally, since the sequence $x=(x_{mn})$ defined by
\begin{eqnarray*}
x_{mn}:=\left\{
\begin{array}{rcl}
1&, & n=0 \\
0&,  &\textrm{otherwise} \\
\end{array}
\right.
\end{eqnarray*}
for all $m,n\in \mathbb{N}$ is in $\mathcal{L}_{q}(\Delta)$ but not
in $\mathcal{L}_{q}$, as asserted. This completes the proof.
\end{proof}

\begin{thm}\label{t2.9}
The following statements hold:

(i)  $\mathcal{C}_{p}$ is the subspace of the space
$\mathcal{C}_{p}(\Delta)$.

(ii) $\mathcal{C}_{0p}$ is the subspace of the space
$\mathcal{C}_{0p}(\Delta)$.

(iii) $\mathcal{C}_{bp}$ is the subspace of the space
$\mathcal{C}_{bp}(\Delta)$.

(iv) $\mathcal{C}_{r}$ is the subspace of the space
$\mathcal{C}_{r}(\Delta)$.
\end{thm}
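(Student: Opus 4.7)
The plan is to prove all four inclusions by the same underlying observation and then invoke linearity. Since both sides of each inclusion are already known to be linear spaces (by Theorem~2.1 and the classical facts recalled in the introduction), once $\mathcal{C}_{p} \subset \mathcal{C}_{p}(\Delta)$, etc., is established, the word ``subspace'' follows automatically because the inclusion is the identity map.

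The core computation is this: if $x=(x_{mn})$ converges in the Pringsheim sense to some limit $l$, write
\begin{equation*}
y_{mn} = (x_{mn}-l) - (x_{m,n-1}-l) - (x_{m-1,n}-l) + (x_{m-1,n-1}-l),
\end{equation*}
so that $|y_{mn}|$ is bounded by a sum of four quantities, each less than $\varepsilon/4$ for $m,n$ large enough. Hence $p$-$\lim y_{mn}=0$. This single computation proves (i) (with $l$ arbitrary, yielding $x\in\mathcal{C}_{0p}(\Delta)\subset\mathcal{C}_{p}(\Delta)$) and (ii) (with $l=0$). For (iii) I combine this with the trivial estimate $|y_{mn}|\leq 4\|x\|_{\infty}$, so $y\in\mathcal{C}_{bp}$ and $x\in\mathcal{C}_{bp}(\Delta)$.

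Part (iv) is the one requiring a little more care, and is where I expect the main (mild) obstacle. Pringsheim convergence of $y_{mn}$ is already handled, so what must be checked is the single-index convergence. Fix $m\in\mathbb{N}$ and let $a_{m}:=\lim_{n\to\infty} x_{mn}$, which exists because $x\in\mathcal{C}_{r}$. Then as $n\to\infty$ the four terms in $y_{mn}$ tend respectively to $a_{m}$, $a_{m}$, $a_{m-1}$, $a_{m-1}$, and cancel in pairs, giving $\lim_{n\to\infty}y_{mn}=0$. The analogous computation with $m$ and $n$ exchanged gives $\lim_{m\to\infty}y_{mn}=0$ for each fixed $n$. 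Thus $y\in\mathcal{C}_{r}$ and $x\in\mathcal{C}_{r}(\Delta)$.

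Finally, to exhibit strictness of all four inclusions simultaneously, I would use the sequence $x_{mn}:=m$, for which a direct computation gives $(\Delta x)_{mn}=0$ for all $m,n$; hence $x$ lies in each of $\mathcal{C}_{p}(\Delta)$, $\mathcal{C}_{0p}(\Delta)$, $\mathcal{C}_{bp}(\Delta)$, $\mathcal{C}_{r}(\Delta)$, while it fails to lie in any of $\mathcal{C}_{p}$, $\mathcal{C}_{0p}$, $\mathcal{C}_{bp}$, $\mathcal{C}_{r}$ because it is unbounded and not Pringsheim-convergent. This is parallel to the counterexample $x_{mn}=mn$ used in the proof of Theorem~\ref{t2.3}.
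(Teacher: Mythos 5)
Your proof is correct and takes essentially the same route as the paper: the same four-term triangle-inequality estimate on $(\Delta x)_{mn}$ after subtracting the Pringsheim limit $l$ from each term, giving $p$-$\lim(\Delta x)_{mn}=0$. The paper writes out only case (i) and uses $x_{mn}=(m+1)(n+1)$ for strictness; your explicit verification of the row and column limits in (iv) and your example $x_{mn}=m$ (whose $\Delta$-transform is Pringsheim-null, so it also witnesses strictness for $\mathcal{C}_{0p}$, which the paper's example does not) are useful extra detail but not a different method.
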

\begin{proof}
We only prove that the inclusion $\mathcal{C}_{p} \subset
\mathcal{C}_{p}(\Delta)$ holds. Let us take $x\in \mathcal{C}_{p}$.
Then, for a given $\varepsilon>0$, there exists an
$n_{x}(\varepsilon)\in \mathbb{N}$ such that
$$
|x_{mn}-l|<\frac{\varepsilon}{4}
$$
for all $m,n>n_{x}(\varepsilon)$. Then,
\begin{eqnarray*}
|(\Delta x)_{mn}|&=&\big|x_{mn}-x_{m,n-1}-x_{m-1,n}+x_{m-1,n-1}\big|\\
&\leq&|x_{mn}-l|+|x_{m,n-1}-l|+|x_{m-1,n}-l|+|x_{m-1,n-1}-l|\\
&<&\frac{\varepsilon}{4}+\frac{\varepsilon}{4}+\frac{\varepsilon}{4}+\frac{\varepsilon}{4}=\varepsilon
\end{eqnarray*}
for sufficiently large $m,n$ which means that $p-\lim(\Delta
x)_{mn}=0$. Hence, $x\in \mathcal{C}_{p}(\Delta)$ that is to say
that $\mathcal{C}_{p} \subset \mathcal{C}_{p}(\Delta)$ holds, as
expected.

Now, we see that the inclusion is strict. Let $x=(x_{mn})$ be
defined by $$x_{mn}=(m+1)(n+1)$$ for all $m,n\in \mathbb{N}$. It is
easy to see that
$$
p-\lim (\Delta x)_{mn}=1.
$$
But
$$
\lim_{m,n\rightarrow \infty}(m+1)(n+1)
$$
which does not tend to a finite limit. Hence $x\notin
\mathcal{C}_{p}$. This completes the proof.
\end{proof}
\begin{lem}\cite[Theorem 2.3]{bafb1}\label{l2.10}
$\mathcal{CS}_{p}$ is the subspace of $\mathcal{C}_{p}$.
\end{lem}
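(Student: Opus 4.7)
The plan is to establish the inclusion $\mathcal{CS}_p \subseteq \mathcal{C}_p$ and then observe that the subspace property follows automatically from the linearity of the partial sum correspondence $x\mapsto s$. In fact I would prove the stronger statement $\mathcal{CS}_p \subseteq \mathcal{C}_{0p}$, which gives the desired conclusion since $\mathcal{C}_{0p}\subset \mathcal{C}_p$. Taking an arbitrary $x=(x_{ij})\in \mathcal{CS}_p$, by definition its partial sum sequence $s=(s_{mn})$ with $s_{mn}=\sum_{i=0}^{m}\sum_{j=0}^{n} x_{ij}$ lies in $\mathcal{C}_p$, so there is some $S\in\mathbb{R}$ with $p\text{-}\lim s_{mn}=S$.

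The crucial identity, valid for all $m,n\geq 1$, is the inverse of the summation relation, namely $\Delta s = x$:
\[
x_{mn}=s_{mn}-s_{m-1,n}-s_{m,n-1}+s_{m-1,n-1}.
\]
Given $\varepsilon>0$, choose $N\in\mathbb{N}$ so that $|s_{mn}-S|<\varepsilon/4$ for all $m,n>N$. Then for $m,n>N+1$ all four of the indices $(m,n),(m-1,n),(m,n-1),(m-1,n-1)$ exceed $N$, and the triangle inequality yields
\[
|x_{mn}|\leq |s_{mn}-S|+|s_{m-1,n}-S|+|s_{m,n-1}-S|+|s_{m-1,n-1}-S|<\varepsilon.
\]
This proves $p\text{-}\lim x_{mn}=0$, so $x\in \mathcal{C}_{0p}\subset \mathcal{C}_p$, as required.

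For the subspace property, one checks that the map $x\mapsto s$ is linear, so any linear combination of elements of $\mathcal{CS}_p$ has its partial sums equal to the corresponding linear combination of the partial sums, which still belongs to the linear space $\mathcal{C}_p$. Hence $\mathcal{CS}_p$ is closed under addition and scalar multiplication, and is a linear subspace of $\mathcal{C}_p$.

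There is no genuine obstacle in this argument; it is essentially a direct computation using the identity $\Delta s = x$ together with the triangle inequality. The only mild subtlety is the index shift, i.e.\ taking indices larger than $N+1$ rather than $N$, so that all four corner terms of the identity for $x_{mn}$ fall in the range where $s$ is within $\varepsilon/4$ of its Pringsheim limit $S$. Observe that this also suggests the inclusion is strict in general: any nonzero constant double sequence lies in $\mathcal{C}_p$ but its partial sums are unbounded, so it is not in $\mathcal{CS}_p$.
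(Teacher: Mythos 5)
This lemma is imported by the paper from \cite[Theorem 2.3]{bafb1} and is stated without proof, so there is no in-paper argument to compare against. Your proof is correct and is the standard one: the identity $x_{mn}=s_{mn}-s_{m-1,n}-s_{m,n-1}+s_{m-1,n-1}$ plus the $\varepsilon/4$ estimate gives the stronger inclusion $\mathcal{CS}_{p}\subseteq\mathcal{C}_{0p}\subseteq\mathcal{C}_{p}$, linearity of $x\mapsto s$ handles the subspace claim, and your constant-sequence example correctly shows the inclusion is strict (though strictness is not asserted in the statement).
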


Combining Lemma \ref{l2.10} and Theorem \ref{t2.9}, we get the
following corollary.
\begin{cor}
The inclusion $\mathcal{CS}_{p}\subset \mathcal{C}_{p}(\Delta)$
strictly holds.
\end{cor}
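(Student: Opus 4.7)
The plan is to obtain the inclusion by transitivity from the two facts already at our disposal: Lemma \ref{l2.10} gives $\mathcal{CS}_{p}\subset \mathcal{C}_{p}$, and Theorem \ref{t2.9}(i) gives $\mathcal{C}_{p}\subset \mathcal{C}_{p}(\Delta)$. Chaining these two containments immediately yields $\mathcal{CS}_{p}\subset \mathcal{C}_{p}(\Delta)$, so no new analytic work is required for the inclusion itself.

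For strictness, I would reuse the same witness employed in the proof of Theorem \ref{t2.9}(i), namely $x=(x_{mn})$ with $x_{mn}=(m+1)(n+1)$. It was already verified there that $p\text{-}\lim(\Delta x)_{mn}=1$, so $x\in \mathcal{C}_{p}(\Delta)$, while $(m+1)(n+1)$ does not tend to a finite limit in the Pringsheim sense, so $x\notin \mathcal{C}_{p}$. Since $\mathcal{CS}_{p}\subset \mathcal{C}_{p}$ by Lemma \ref{l2.10}, the failure of $x$ to lie in $\mathcal{C}_{p}$ forces $x\notin \mathcal{CS}_{p}$ as well; hence $x\in \mathcal{C}_{p}(\Delta)\setminus \mathcal{CS}_{p}$, which establishes the strict inclusion.

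There is essentially no obstacle here, since the corollary is stated precisely as a combination of the two previously established results; the only thing to be careful about is to note that strictness is inherited automatically from the stricter fact $\mathcal{C}_{p}\not\subset \mathcal{C}_{p}(\Delta)^{c}\cup \mathcal{CS}_{p}$, i.e., from the existence of an element in $\mathcal{C}_{p}(\Delta)$ lying outside the larger set $\mathcal{C}_{p}\supset \mathcal{CS}_{p}$.
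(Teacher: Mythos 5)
Your argument is correct and is exactly what the paper intends: the corollary is stated with no separate proof, obtained by chaining Lemma \ref{l2.10} with Theorem \ref{t2.9}(i), and your strictness argument via the witness $x_{mn}=(m+1)(n+1)$ (which lies outside $\mathcal{C}_{p}\supset\mathcal{CS}_{p}$ but inside $\mathcal{C}_{p}(\Delta)$) is the natural completion. The only blemish is the garbled set-theoretic restatement in your final sentence (the condition $\mathcal{C}_{p}\not\subset \mathcal{C}_{p}(\Delta)^{c}\cup \mathcal{CS}_{p}$ is not what your witness verifies, since $x\notin\mathcal{C}_{p}$), but this is redundant with the correct argument already given in your second paragraph.
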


\section{The alpha- and beta-duals of the new spaces of double sequences}

In this section, we determine the alpha-dual of the space
$\mathcal{M}_{u}(\Delta)$ and the $\beta(r)-$dual of the space
$\mathcal{C}_{r}(\Delta)$, and $\beta(\vartheta)-$dual of the space
$\mathcal{C}_{\eta}(\Delta)$ of double sequences, $\vartheta,\eta
\in \{p,bp,r\}$. Although the alpha-dual of a space of double
sequences is unique, its beta-dual may be more than one with respect
to $\vartheta-$convergence.

\begin{thm}
$\{\mathcal{M}_{u}(\Delta)\}^\alpha=\mathcal{L}_u$.
\end{thm}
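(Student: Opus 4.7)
The plan is to prove the equality $\{\mathcal{M}_{u}(\Delta)\}^{\alpha}=\mathcal{L}_{u}$ by establishing the two inclusions separately.

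For the inclusion $\{\mathcal{M}_{u}(\Delta)\}^{\alpha}\subseteq \mathcal{L}_{u}$, I would first recall that Theorem \ref{t2.3} gives $\mathcal{M}_{u}\subset \mathcal{M}_{u}(\Delta)$. Since the $\alpha$-dual is monotonically reversing (as noted in the Introduction), this immediately yields $\{\mathcal{M}_{u}(\Delta)\}^{\alpha}\subseteq \mathcal{M}_{u}^{\alpha}$. It then remains to invoke the classical identity $\mathcal{M}_{u}^{\alpha}=\mathcal{L}_{u}$: one direction is immediate from $|a_{ij}x_{ij}|\leq \|x\|_{\infty}|a_{ij}|$, while the reverse follows by testing any $a\in \mathcal{M}_{u}^{\alpha}$ against the bounded double sequence $x_{ij}=\mathrm{sgn}(a_{ij})$, forcing $\sum_{i,j}|a_{ij}|<\infty$.

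For the reverse inclusion $\mathcal{L}_{u}\subseteq \{\mathcal{M}_{u}(\Delta)\}^{\alpha}$, I would take an arbitrary $a=(a_{ij})\in \mathcal{L}_{u}$ and $x=(x_{ij})\in \mathcal{M}_{u}(\Delta)$. Using the isomorphism $T$ from Theorem \ref{t2.2}, set $y=\Delta x\in \mathcal{M}_{u}$, so that $x_{ij}=\sum_{m=0}^{i}\sum_{n=0}^{j}y_{mn}$. Substituting this representation into $\sum_{i,j}|a_{ij}x_{ij}|$, bounding termwise by $|a_{ij}|\sum_{m\leq i,\,n\leq j}|y_{mn}|$, and interchanging the order of summation (legitimate by nonnegativity) produces an expression of the form $\sum_{m,n}|y_{mn}|\,T_{mn}(a)$, where $T_{mn}(a)=\sum_{i\geq m,\,j\geq n}|a_{ij}|$ is a tail of $a$. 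One then tries to conclude from $\|y\|_{\infty}<\infty$ and the $\ell^{1}$-summability of $a$ that the whole expression is finite.

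The main obstacle is precisely this second inclusion. The crude pointwise estimate $|x_{ij}|\leq (i+1)(j+1)\|y\|_{\infty}$ yields the weighted sum $\sum_{i,j}(i+1)(j+1)|a_{ij}|$, which is not controlled by $\|a\|_{\mathcal{L}_{u}}$ alone; likewise the tail sum $T_{mn}(a)$ is only bounded by $\|a\|_{\mathcal{L}_{u}}$ uniformly in $m,n$ and summing it over $m,n$ diverges. So the proof cannot proceed by a blunt triangle-inequality estimate: it must exploit the cancellation in $x_{ij}=\sum y_{mn}$ through a genuine summation-by-parts, choosing partial sums so that each $|y_{mn}|$ only interacts with a summable set of indices $(i,j)$. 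This delicate rearrangement is the step that deserves the most care in the full write-up.
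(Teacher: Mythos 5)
Your first inclusion, $\{\mathcal{M}_{u}(\Delta)\}^{\alpha}\subseteq \mathcal{L}_{u}$, is correct and is essentially what the paper does: testing against the constant sequence (which lies in $\mathcal{M}_{u}\subset\mathcal{M}_{u}(\Delta)$) is the same as your monotonicity argument combined with $\mathcal{M}_{u}^{\alpha}=\mathcal{L}_{u}$.

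The gap is the second inclusion, and it cannot be repaired: the obstruction you ran into is real, not a technical nuisance. The $\alpha$-dual demands \emph{absolute} convergence of $\sum_{i,j}a_{ij}x_{ij}$, so there is no cancellation available and the summation-by-parts you defer to ``the full write-up'' cannot exist. Concretely, take $x_{mn}=(m+1)(n+1)$; then $(\Delta x)_{mn}=1$ for all $m,n$, so $x\in\mathcal{M}_{u}(\Delta)$, while $a_{mn}=(m+1)^{-2}(n+1)^{-2}$ lies in $\mathcal{L}_{u}$ and yet $\sum_{m,n}|a_{mn}x_{mn}|=\sum_{m,n}\frac{1}{(m+1)(n+1)}=\infty$. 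Hence $\mathcal{L}_{u}\not\subseteq\{\mathcal{M}_{u}(\Delta)\}^{\alpha}$ and the theorem as stated is false; your crude bound $|x_{ij}|\leq (i+1)(j+1)\,\|\Delta x\|_{\infty}$ is in fact sharp, and the correct identity is $\{\mathcal{M}_{u}(\Delta)\}^{\alpha}=\{a\in\Omega:\sum_{i,j}(i+1)(j+1)|a_{ij}|<\infty\}$, i.e.\ the set $F_{1}$ that the paper itself introduces later for the $\beta(r)$-dual. For comparison, the paper's own proof of the direction you could not establish is erroneous: it asserts that $y\in\mathcal{M}_{u}$ yields $|y_{ij}|\leq K/((k+1)(l+1))$, which boundedness does not give. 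So you did not miss a trick; you correctly located the point where both the argument and the statement break down, and the only step missing from your write-up is turning that obstruction into the counterexample above.
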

\begin{proof}
Let $x=(x_{kl})\in \mathcal{M}_{u}(\Delta)$ and $z=(z_{kl})\in
\mathcal{L}_u$. Hence, there is a sequence $y=(y_{ij})\in
\mathcal{M}_{u}$ related with $x=(x_{kl})$ from Theorem \ref{t2.2}
and there is a positive real number $K$ such that
$|y_{ij}|\leq\frac{K}{(k+1)(l+1)}$ for all $i,j\in\mathbb{N}$. So we
use the relation (\ref{2.6})we have that,

$$
\sum_{k,l}|z_{kl}x_{kl}|=\sum_{k,l}\bigg|z_{kl}\sum_{i=0}^k\sum_{j=0}^ly_{ij}\bigg|\leq
K\sum_{k,l}|z_{kl}| <\infty$$ so
$z\in\{\mathcal{M}_{u}(\Delta)\}^\alpha$, that is
\begin{equation}\label{2.7}
\mathcal{L}_u\subset\{\mathcal{M}_{u}(\Delta)\}^\alpha.
\end{equation}

Conversely, suppose that
$z=(z_{kl})\in\{\mathcal{M}_{u}(\Delta)\}^\alpha$, that is
$\sum_{k,l}|z_{kl}x_{kl}|<\infty$ for all $x=(x_{kl})\in
\mathcal{M}_{u}(\Delta)$. If $z=(z_{kl})\notin \mathcal{L}_u$, then
$\sum_{k,l}|z_{kl}|=\infty$. Further, if we choose $y=(y_{kl})$ such
that

\begin{eqnarray*}
y_{kl}:=\left\{
\begin{array}{rcl}
\displaystyle \frac{1}{(k+1)(l+1)}&, &0\leq i\leq k,\quad 0\leq j\leq l \\
\displaystyle 0&,  &\textrm{otherwise} \\
\end{array}
\right.
\end{eqnarray*}
for all $k,l\in \mathbb{N}$. Then, $y\in\mathcal{M}_{u}$  but

$$
\sum_{k,l}|z_{kl}x_{kl}|=\sum_{k,l}\bigg|z_{kl}\sum_{i=0}^k\sum_{j=0}^l\frac{1}{(k+1)(l+1)}\bigg|
=\sum_{k,l}|z_{kl}|=\infty.
$$
Hence, $z\notin\{\mathcal{M}_{u}(\Delta)\}^\alpha$,this is a
contradiction. So, we have the following inclusion,

\begin{equation}\label{2.8}
\{\mathcal{M}_{u}(\Delta)\}^\alpha\subset\mathcal{L}_u.
\end{equation}\\
Hence, from the inclusions (\ref{2.7}) and (\ref{2.8}) we get
$$
\{\mathcal{M}_{u}(\Delta)\}^\alpha=\mathcal{L}_u.
$$
\end{proof}

Now, we may give the $\beta-$duals of the spaces with respect to the
$\vartheta-$convergence using the technique in \cite{bafb2} and
\cite{bafb3} for the single sequences.

The conditions for a 4-dimensional matrix to transform the spaces
$\mathcal{C}_{bp}, \mathcal{C}_{r}$ and $\mathcal{C}_{p}$ into the
space $\mathcal{C}_{bp}$ are well known (see for example
\cite{hjh,mzmmsam}).

\begin{lem}\label{beta-r}
The matrix $A=(a_{mnij})$ is in
$(\mathcal{C}_r:\mathcal{C}_{\vartheta})$ if and only if the
following conditions hold:

\begin{eqnarray}
\sup_{m,n\in \mathbb{N}}\sum_{i,j}|a_{mnij}|<\infty \label{r1},\\
\exists~v\in\mathbb{C}\ni\vartheta-\lim_{m,n\rightarrow\infty}\sum_{i,j}a_{mnij}=v\label{r2},\\
\exists~(a_{ij})\in\Omega\ni\vartheta-\lim_{m,n\rightarrow\infty}a_{mnij}=a_{ij}\quad\textrm{for
all}~\textrm{i,j}\in\mathbb{N}\label{r3},\\
\exists~u^{j_0}\in\mathbb{C}\ni\vartheta-\lim_{m,n\rightarrow\infty}\sum_{i}a_{mnij_{0}}=u^{j_0}~
\textrm{for fixed}~\textrm{j}_0\in\mathbb{N}\label{r4},\\
\exists~v_{i_0}\in\mathbb{C}\ni\vartheta-\lim_{m,n\rightarrow\infty}\sum_{j}a_{mni_{0}j}=v_{i_0}~
\textrm{for fixed}~\textrm{i}_0\in\mathbb{N}\label{r5}.
\end{eqnarray}

\end{lem}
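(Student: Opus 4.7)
The plan is to follow the classical Silverman--Toeplitz pattern, splitting into necessity and sufficiency. For \emph{necessity}, assuming $A\in(\mathcal{C}_r:\mathcal{C}_\vartheta)$, I would feed $A$ the following test sequences, each readily verified to lie in $\mathcal{C}_r$: the constant sequence $x\equiv 1$ yields (\ref{r2}); the unit sequence supported at a single entry $(i_0,j_0)$ yields (\ref{r3}); the strip $x_{ij}=\delta_{j,j_0}$, which is regularly null since it vanishes once $j>j_0$, yields (\ref{r4}); symmetrically, $x_{ij}=\delta_{i,i_0}$ gives (\ref{r5}). Condition (\ref{r1}) is the uniform $\ell^{1}$-estimate on the rows; it follows from $Ax\in\mathcal{C}_\vartheta\subset\mathcal{M}_u$ for every $x\in\mathcal{C}_r$, either via the closed-graph theorem applied in the $BDK$-space $(\mathcal{C}_r,\|\cdot\|_\infty)$ or through a gliding-hump construction which, when (\ref{r1}) fails, produces an $x\in\mathcal{C}_r$ with $(Ax)_{m_kn_k}$ unbounded.

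For \emph{sufficiency}, fix $x\in\mathcal{C}_r$ and set $\ell=p\text{-}\lim x_{ij}$, $\rho_i=\lim_{j\to\infty}x_{ij}$, $\gamma_j=\lim_{i\to\infty}x_{ij}$, so that $\rho_i\to\ell$, $\gamma_j\to\ell$, and $u_{ij}:=x_{ij}-\rho_i-\gamma_j+\ell$ is regularly null with every iterated and Pringsheim limit equal to $0$. Writing $x_{ij}=u_{ij}+(\rho_i-\ell)+(\gamma_j-\ell)+\ell$ and applying $A$ gives the decomposition
\[
(Ax)_{mn}=\sum_{i,j}a_{mnij}u_{ij}+\sum_{i}(\rho_i-\ell)\sum_{j}a_{mnij}+\sum_{j}(\gamma_j-\ell)\sum_{i}a_{mnij}+\ell\sum_{i,j}a_{mnij}.
\]
The last piece tends to $\ell v$ by (\ref{r2}). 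For the two marginal pieces, I would truncate the outer sum at an index $N$ large enough that $\sup_{i>N}|\rho_i-\ell|$ (resp.\ $\sup_{j>N}|\gamma_j-\ell|$) is small; the finite head converges by (\ref{r5}) (resp.\ (\ref{r4})), while the tail is dominated by that small supremum times the uniform constant supplied by (\ref{r1}), yielding the limits $\sum_i(\rho_i-\ell)v_i$ and $\sum_j(\gamma_j-\ell)u^{j}$.

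The principal obstacle lies in the first piece $\sum_{i,j}a_{mnij}u_{ij}$. The key observation is that regular nullity of $u$ implies the rectangular tail estimate $\sup_{\max(i,j)>N}|u_{ij}|\to 0$ as $N\to\infty$, which is proved by combining Pringsheim nullity on the corner $\{i,j>N_0\}$ with the individual row and column nullities applied on the two finite strips $i\le N_0$ and $j\le N_0$. Combined with (\ref{r1}), this makes the tail $\sum_{\max(i,j)>N}a_{mnij}u_{ij}$ uniformly small in $(m,n)$, while the finite block $\sum_{\max(i,j)\le N}a_{mnij}u_{ij}$ has $\vartheta$-limit $\sum_{\max(i,j)\le N}a_{ij}u_{ij}$ by (\ref{r3}). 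A standard $\varepsilon/2$ argument then identifies the overall $\vartheta$-limit as $\sum_{i,j}a_{ij}u_{ij}$, so $Ax\in\mathcal{C}_\vartheta$ with
\[
\vartheta\text{-}\lim(Ax)_{mn}=\ell v+\sum_i(\rho_i-\ell)v_i+\sum_j(\gamma_j-\ell)u^{j}+\sum_{i,j}a_{ij}u_{ij},
\]
completing the sufficiency and hence the lemma.
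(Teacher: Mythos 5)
The paper gives no proof of this lemma: it is imported from the literature (the sentence preceding it points to \cite{hjh} and \cite{mzmmsam}, and in fact speaks of transformations \emph{into} $\mathcal{C}_{bp}$), so there is no internal argument to compare yours against. What you have written is the classical Robison--Hamilton scheme, and the sufficiency half is essentially correct and complete in outline: the decomposition $x_{ij}=u_{ij}+(\rho_i-\ell)+(\gamma_j-\ell)+\ell$ with $u$ regularly null, the rectangular tail estimate $\sup_{\max(i,j)>N}|u_{ij}|\to 0$ (this is exactly the point where membership in $\mathcal{C}_r$ rather than $\mathcal{C}_{bp}$ is used), and the head/tail splits driven by (\ref{r1}) and (\ref{r3})--(\ref{r5}) all work. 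Two small things you should make explicit: the interchange $\sum_{i,j}a_{mnij}(\rho_i-\ell)=\sum_i(\rho_i-\ell)\sum_j a_{mnij}$ is justified by the absolute summability of each row coming from (\ref{r1}); and the convergence of the series $\sum_{i,j}a_{ij}u_{ij}$, $\sum_i(\rho_i-\ell)v_i$, $\sum_j(\gamma_j-\ell)u^{j}$ in your limit formula needs $\sum_{i,j}|a_{ij}|\le\liminf_{m,n}\sum_{i,j}|a_{mnij}|<\infty$ (and similarly $\sum_i|v_i|$, $\sum_j|u^j|<\infty$), a Fatou-type consequence of (\ref{r1}) and (\ref{r3})--(\ref{r5}). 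The test sequences in the necessity half correctly deliver (\ref{r2})--(\ref{r5}).

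The one genuine weak point is your derivation of (\ref{r1}). You write $Ax\in\mathcal{C}_\vartheta\subset\mathcal{M}_u$, but this inclusion is false for $\vartheta=p$; the paper's own introduction exhibits an unbounded member of $\mathcal{C}_p$. For $\vartheta\in\{bp,r\}$ your route is fine: each row functional $x\mapsto\sum_{i,j}a_{mnij}x_{ij}$ is bounded on the $BDK$-space $\mathcal{C}_r$ with norm $\sum_{i,j}|a_{mnij}|$, and $\sup_{m,n}|(Ax)_{mn}|<\infty$ for each $x$ lets the uniform boundedness principle finish. For $\vartheta=p$, however, the full supremum in (\ref{r1}) is simply not necessary: the matrix with $a_{1n00}=n$ and all other entries zero maps every $x$ to a sequence that vanishes for $m\ge 2$, hence lies in $\mathcal{C}_p$, while $\sup_{m,n}\sum_{i,j}|a_{mnij}|=\infty$; a gliding hump can then only produce unboundedness along a strip, which does not contradict $p$-convergence. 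This is really a defect of the lemma as stated for $\vartheta=p$ (the cited sources characterize maps into $\mathcal{C}_{bp}$), but your proof silently inherits it; you should restrict the necessity of (\ref{r1}) to $\vartheta\in\{bp,r\}$, or replace the supremum by one taken over all sufficiently large $m,n$.
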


\begin{lem}\label{beta-bp}
The matrix $A=(a_{mnij})$ is in
$(\mathcal{C}_{bp}:\mathcal{C}_{\vartheta})$ if and only if the
conditions (\ref{r1})-(\ref{r3}) of Lemma \ref{beta-r} hold, and
\begin{eqnarray}
\vartheta-\lim_{m,n\rightarrow\infty}\sum_{i}|a_{mnij_0}-a_{ij_0}|=0~~\textrm{for
each fixed}~\textrm{j}_0\in\mathbb{N}\label{bp1},\\
\vartheta-\lim_{m,n\rightarrow\infty}\sum_{j}|a_{mni_0j}-a_{i_0j}|=0~~\textrm{for
each fixed}~\textrm{i}_0\in\mathbb{N}.\label{bp2}
\end{eqnarray}
\end{lem}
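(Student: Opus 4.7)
The plan is to prove the two implications separately, following the structure of Lemma~\ref{beta-r} and the classical Silverman--Toeplitz type results cited above (see \cite{hjh,mzmmsam}).

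For the necessity direction I would assume $A\in(\mathcal{C}_{bp}:\mathcal{C}_{\vartheta})$. Because $\mathcal{C}_{r}\subset\mathcal{C}_{bp}$, the inclusion $A\in(\mathcal{C}_{r}:\mathcal{C}_{\vartheta})$ is automatic, so Lemma~\ref{beta-r} delivers (\ref{r1}), (\ref{r2}) and (\ref{r3}) for free. To obtain (\ref{bp1}), I would exploit column--supported test sequences: given $j_{0}\in\mathbb{N}$ and any bounded single sequence $(\xi_{i})$, the double sequence $x^{(j_{0})}=(x_{ij})$ defined by $x_{ij}=\xi_{i}$ if $j=j_{0}$ and $x_{ij}=0$ otherwise lies in $\mathcal{C}_{bp0}\subset\mathcal{C}_{bp}$, so $(Ax^{(j_{0})})_{mn}=\sum_{i}a_{mnij_{0}}\xi_{i}$ must be $\vartheta$-convergent. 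Applying this uniformly over bounded $(\xi_{i})$ and combining with the pointwise limits from (\ref{r3}) and the $\ell^{1}$-bound from (\ref{r1}), a sliding--hump argument on $\sum_{i}|a_{mnij_{0}}-a_{ij_{0}}|$ forces (\ref{bp1}). Condition (\ref{bp2}) follows symmetrically from row--supported test sequences.

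For the sufficiency direction I would take $x\in\mathcal{C}_{bp}$ with $bp-\lim x_{ij}=L$ and write $x=Le+u$, where $e_{ij}\equiv 1$ and $u\in\mathcal{C}_{bp0}$. The contribution $L\sum_{i,j}a_{mnij}$ is $\vartheta$-convergent to $Lv$ by (\ref{r2}), so it suffices to analyse $Au$. Here I would use a cross decomposition $u=u^{(k,l)}+w^{(k,l)}$, where $u^{(k,l)}_{ij}=u_{ij}$ whenever $i\leq k$ or $j\leq l$ and $u^{(k,l)}_{ij}=0$ otherwise. Given $\varepsilon>0$, the $bp$-nullity of $u$ lets us pick $k,l$ so that $|u_{ij}|<\varepsilon$ on the support of $w^{(k,l)}$, whence $|\sum_{i,j}a_{mnij}w^{(k,l)}_{ij}|\leq\varepsilon\sup_{m,n}\sum_{i,j}|a_{mnij}|$, which is controlled uniformly in $m,n$ by (\ref{r1}). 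The cross piece splits into finitely many column strips and row strips; on the $j_{0}$-th column strip the partial sum is $\sum_{i}a_{mnij_{0}}u_{i,j_{0}}$, and (\ref{r1}), (\ref{r3}) together with (\ref{bp1}) give $\vartheta$-convergence of this to $\sum_{i}a_{ij_{0}}u_{i,j_{0}}$, while (\ref{bp2}) handles the row strips in the same way.

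The principal obstacle is this final assembly step in the sufficiency proof: one must ensure that the cross and tail approximation errors combine to yield a genuinely $\vartheta$-convergent sequence $(Ax)_{mn}$, not merely a $p$-convergent one, since (\ref{bp1}) and (\ref{bp2}) supply uniform $\ell^{1}$-type control only in the $\vartheta$-sense. The cleanest resolution, I expect, is to phrase the cross--tail estimates as a Cauchy condition on $(Au)_{mn}-(Au)_{m'n'}$, so that the precise flavour of $\vartheta$-convergence is inherited directly from the hypotheses rather than rebuilt from the pointwise limits $a_{ij}$.
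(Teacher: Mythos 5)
The paper offers no proof of this lemma: it is imported as a known characterization, the sentence preceding Lemma~\ref{beta-r} referring the reader to \cite{hjh} and \cite{mzmmsam} for the conditions under which a four-dimensional matrix maps $\mathcal{C}_{bp}$, $\mathcal{C}_{r}$ and $\mathcal{C}_{p}$ into $\mathcal{C}_{bp}$. There is therefore nothing in the paper to compare your argument against; what you sketch is, in outline, the classical Robison--Hamilton proof that those sources contain, and it is sound. The decisive point in the necessity direction --- and the reason (\ref{bp1})--(\ref{bp2}) are genuinely stronger than the conditions (\ref{r4})--(\ref{r5}) appearing in Lemma~\ref{beta-r} --- is exactly the one you isolate: a double sequence supported on a single column $j_{0}$ with \emph{arbitrary bounded} entries is Pringsheim-null, hence lies in $\mathcal{C}_{bp0}\subset\mathcal{C}_{bp}$, so the restriction of $A$ to that column must map $\ell_{\infty}$ into convergent sequences, and Schur's theorem then forces $\sum_{i}|a_{mnij_{0}}-a_{ij_{0}}|\to 0$; for $\mathcal{C}_{r}$ such a test sequence is admissible only when the column converges as a single sequence, which is why the regular case gets away with (\ref{r4})--(\ref{r5}). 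Your sufficiency argument via $x=Le+u$ and the cross decomposition is likewise the standard one, and the assembly difficulty you flag is resolved by observing that the tail estimate $\varepsilon\sup_{m,n}\sum_{i,j}|a_{mnij}|$ is uniform in $(m,n)$: thus $(Au)_{mn}$ is a uniform limit of $\vartheta$-convergent double sequences and hence $\vartheta$-convergent for each of $\vartheta=p,bp,r$, which is equivalent to the Cauchy formulation you propose. The only items left at sketch level are the sliding-hump construction itself and the bookkeeping for the overlap of the row and column strips in the cross (the finite block $i\leq k$, $j\leq l$ is counted twice and must be subtracted, which (\ref{r3}) handles pointwise); neither conceals a real obstruction.
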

\begin{lem}\label{beta-p}
The matrix $A=(a_{mnij})$ is in
$(\mathcal{C}_{p}:\mathcal{C}_{\vartheta})$ if and only if the
conditions (\ref{r1})-(\ref{r3}) of Lemma \ref{beta-r} hold, and
\begin{eqnarray}
\forall~i\in\mathbb{N}~ \exists ~J\in\mathbb{N}\ni
a_{mnij}=0~\textrm{for}~j>J~\textrm{for all m,n}\in\mathbb{N}\label{p1},\\
\forall~j\in\mathbb{N}~ \exists ~I\in\mathbb{N}\ni
a_{mnij}=0~\textrm{for}~i>I~\textrm{for all
m,n}\in\mathbb{N}.\label{p2}
\end{eqnarray}
\end{lem}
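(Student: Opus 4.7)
The plan is to prove both inclusions of the ``if and only if'' statement, building on Lemma \ref{beta-bp} and exploiting the additional flexibility offered by unbounded Pringsheim-convergent sequences in $\mathcal{C}_{p}\setminus\mathcal{C}_{bp}$.

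For necessity, the inclusion $\mathcal{C}_{bp}\subset\mathcal{C}_{p}$ immediately yields $A\in(\mathcal{C}_{bp}:\mathcal{C}_{\vartheta})$, so Lemma \ref{beta-bp} delivers conditions (\ref{r1})--(\ref{r3}) at no extra cost. To extract (\ref{p1}) I would test $A$ on Boos-type sequences: fix $i_{0}\in\mathbb{N}$, take any scalar sequence $c=(c_{l})$, and define $x^{(i_{0},c)}=(x_{kl})$ by $x_{i_{0},l}=c_{l}$ and $x_{kl}=0$ for $k\neq i_{0}$; then $p-\lim x=0$, so $x^{(i_{0},c)}\in\mathcal{C}_{p}$. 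For each $(m,n)$, existence of $(Ax^{(i_{0},c)})_{mn}=\vartheta-\sum_{l}a_{mn,i_{0},l}c_{l}$ for every admissible $(c_{l})$ already forces finite support of $l\mapsto a_{mn,i_{0},l}$ for each fixed $(m,n)$, via the gliding-hump choice $c_{l}=\mathrm{sgn}(a_{mn,i_{0},l})/|a_{mn,i_{0},l}|$ on the indices where $a\neq 0$. To upgrade this per-$(m,n)$ finiteness to a uniform bound $J$ independent of $(m,n)$, I would argue by contradiction, inductively picking triples $(m_{r},n_{r},j_{r})$ with $j_{r}\to\infty$, $a_{m_{r},n_{r},i_{0},j_{r}}\neq 0$, and $j_{r}$ strictly larger than the $(m_{s},n_{s})$-local $j$-support bound for every previously chosen $s<r$; then recursively choosing $c_{j_{r}}$ so that $(Ax^{(i_{0},c)})_{m_{r},n_{r}}=r$, which contradicts $Ax\in\mathcal{C}_{\vartheta}$. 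Condition (\ref{p2}) follows by the symmetric column argument.

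For sufficiency, my first step would be to observe that (\ref{p1}) and (\ref{p2}) together with (\ref{r3}) already imply conditions (\ref{bp1}) and (\ref{bp2}) of Lemma \ref{beta-bp}: by (\ref{p2}), the series $\sum_{i}|a_{mnij_{0}}-a_{ij_{0}}|$ collapses to a finite sum over $i\leq I_{j_{0}}$, and each summand $\vartheta$-converges to $0$ by (\ref{r3}), which gives (\ref{bp1}); (\ref{bp2}) is symmetric. Hence Lemma \ref{beta-bp} yields $A\in(\mathcal{C}_{bp}:\mathcal{C}_{\vartheta})$. To extend to an arbitrary $x\in\mathcal{C}_{p}$ with $p-\lim x=L$, I would split $x$ into an \emph{interior} bounded piece, which is eventually within $\varepsilon$ of $L$ and therefore lies in $\mathcal{C}_{bp}$, and a \emph{boundary} piece supported on finitely many rows $k\leq n_{0}$ and columns $l\leq n_{0}$ that absorbs all possibly-unbounded behavior. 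The $\mathcal{C}_{bp}$-piece is handled by the previous step, while (\ref{p1}) and (\ref{p2}) reduce the $A$-image of the boundary piece at each $(m,n)$ to a finite sum with uniformly bounded support, whose $\vartheta$-limit is controlled by (\ref{r2}) and (\ref{r3}).

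The main obstacle I expect is the uniform-in-$(m,n)$ upgrade in the necessity of (\ref{p1}) and (\ref{p2}); the recursive selection of indices together with recursive coefficient choices that prevent earlier and later contributions from canceling or interfering is the delicate technical point. Once that is in place, the remainder is a direct application of Lemma \ref{beta-bp} combined with standard bookkeeping on the splitting of $\mathcal{C}_{p}$-sequences into a $\mathcal{C}_{bp}$-part and a finitely-supported boundary remainder.
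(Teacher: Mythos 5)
First, a remark on the comparison you asked for: the paper itself offers no proof of this lemma --- it is imported from \cite{hjh,mzmmsam}, where the target space is $\mathcal{C}_{bp}$ rather than a general $\mathcal{C}_{\vartheta}$ --- so your argument stands on its own. Your sufficiency half is sound. Indeed, (\ref{p2}) together with (\ref{r3}) (which forces $a_{ij_0}=0$ for $i>I$) collapses $\sum_{i}|a_{mnij_0}-a_{ij_0}|$ to a fixed finite sum of $\vartheta$-null terms, so (\ref{bp1}) and (\ref{bp2}) follow and Lemma \ref{beta-bp} applies; and the splitting of $x\in\mathcal{C}_{p}$ into a $\mathcal{C}_{bp}$ part plus a part supported on the strips $\{k\leq n_{0}\}\cup\{l\leq n_{0}\}$, whose image under $A$ reduces by (\ref{p1})--(\ref{p2}) to a finite linear combination of entries $a_{mnkl}$ over an index set independent of $(m,n)$, each entry $\vartheta$-convergent by (\ref{r3}), is exactly the right reduction (only (\ref{r3}) is needed there, not (\ref{r2})).

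The gap is in the necessity of (\ref{p1})--(\ref{p2}), at the final step ``which contradicts $Ax\in\mathcal{C}_{\vartheta}$.'' Your selection produces $(Ax)_{m_{r},n_{r}}=r$, but nothing forces $\min(m_{r},n_{r})\to\infty$: all the bad pairs may lie in a single row of the index set, say $m_{r}=m^{*}$ for every $r$. For $\vartheta\in\{bp,r\}$ this is harmless, because $\mathcal{C}_{bp}$ and $\mathcal{C}_{r}$ are contained in $\mathcal{M}_{u}$, so any unbounded set of values of $Ax$ already yields the contradiction --- but this is the only reason the argument closes, and it must be said. For $\vartheta=p$ the argument genuinely fails, and so does the lemma as stated: take $a_{mnij}=1$ when $m=1$, $i=1$, $j=n$, and $a_{mnij}=0$ otherwise. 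Then $(Ax)_{1n}=x_{1n}$ and $(Ax)_{mn}=0$ for $m\geq 2$, so $Ax$ exists and is Pringsheim-null for every $x\in\Omega$; hence $A\in(\mathcal{C}_{p}:\mathcal{C}_{p})$ and (\ref{r1})--(\ref{r3}) hold, yet (\ref{p1}) fails at $i=1$. So your proof can be completed only for $\vartheta\in\{bp,r\}$ (the setting of the cited sources), and the claim should be restricted accordingly rather than the $\vartheta=p$ case repaired.
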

\begin{thm}
Define the sets
\begin{eqnarray*}
F_1&=&\bigg\{a=(a_{ij})\in\Omega:\sum_{i,j}(i+1)(j+1)|a_{ij}|<\infty
\bigg\},\\
F_2&=&\bigg\{a=(a_{ij})\in\Omega:r-\lim_{m,n\rightarrow\infty}\sum_{i}^{m}\sum_{p=i}^{m}\sum_{q=j_0}^{n}a_{pq}~~\textrm{exists}~\textrm{for
each fixed}~j_0
\bigg\},\\
F_3&=&\bigg\{a=(a_{ij})\in\Omega:r-\lim_{m,n\rightarrow\infty}\sum_{j}^{m}\sum_{p=i_0}^{m}\sum_{q=j}^{n}a_{pq}~~\textrm{exists}~\textrm{for
each fixed}~i_0 \bigg\}.
\end{eqnarray*}
Then, $\{\mathcal{C}_r(\Delta)\}^{\beta(r)}=F_1\cap F_2\cap F_3$.
\end{thm}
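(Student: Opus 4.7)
The plan is to recast the dual computation as a $4$-dimensional matrix-transformation problem and invoke Lemma \ref{beta-r}. Fix $z=(z_{kl})\in\Omega$ and let $x\in\mathcal{C}_r(\Delta)$ be arbitrary. By Theorem \ref{t2.2} the sequence $y:=\Delta x$ lies in $\mathcal{C}_r$ with $x_{kl}=\sum_{i=0}^{k}\sum_{j=0}^{l}y_{ij}$, so interchanging the order in the finite partial sum yields
\begin{equation*}
T_{mn}:=\sum_{k,l=0}^{m,n}z_{kl}x_{kl}=\sum_{i,j=0}^{m,n}b_{mnij}\,y_{ij},\qquad b_{mnij}:=\sum_{k=i}^{m}\sum_{l=j}^{n}z_{kl}
\end{equation*}
(with $b_{mnij}=0$ whenever $i>m$ or $j>n$). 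Therefore $z\in\{\mathcal{C}_r(\Delta)\}^{\beta(r)}$ if and only if the $4$-dimensional matrix $B=(b_{mnij})$ belongs to the class $(\mathcal{C}_r:\mathcal{C}_r)$.

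Applying Lemma \ref{beta-r} with $\vartheta=r$ to $B$ produces the five conditions (\ref{r1})--(\ref{r5}). The central step is to convert each of these back to a statement about $z$ by reversing the order of summation. For (\ref{r4}),
\begin{equation*}
\sum_{i=0}^{m}b_{mnij_0}=\sum_{i=0}^{m}\sum_{k=i}^{m}\sum_{l=j_0}^{n}z_{kl}=\sum_{k=0}^{m}(k+1)\sum_{l=j_0}^{n}z_{kl},
\end{equation*}
which is exactly the quantity appearing in $F_2$; a symmetric computation identifies (\ref{r5}) with $F_3$. Likewise $\sum_{i,j=0}^{m,n}b_{mnij}=\sum_{i,j=0}^{m,n}(i+1)(j+1)z_{ij}$, and the estimate $\sum_{i,j=0}^{m,n}|b_{mnij}|\le\sum_{i,j=0}^{m,n}(i+1)(j+1)|z_{ij}|$ is what will link (\ref{r1})--(\ref{r3}) with $F_1$.

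For the inclusion $F_1\cap F_2\cap F_3\subset\{\mathcal{C}_r(\Delta)\}^{\beta(r)}$, I would verify (\ref{r1})--(\ref{r5}) for $B$ directly: the bound above gives (\ref{r1}) from $F_1$, and dominated convergence with the summable majorant $(i+1)(j+1)|z_{ij}|$ supplies (\ref{r2}) and (\ref{r3}); then $F_2$ and $F_3$ produce (\ref{r4}) and (\ref{r5}) via the identities of the previous paragraph. In the reverse direction, (\ref{r4}) and (\ref{r5}) immediately recover $F_2$ and $F_3$, so only the extraction of $F_1$ from (\ref{r1})--(\ref{r3}) remains.

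The main obstacle is precisely this last step, since (\ref{r1}) controls only $|b_{mnij}|=\big|\sum_{k\ge i,\,l\ge j}z_{kl}\big|$, where cancellation in the signs of $z_{kl}$ can hide mass and block an immediate term-by-term absolute bound. My plan is to test against finitely supported sign-selector sequences $y\in\mathcal{C}_r$ (of the form $y_{ij}=\operatorname{sgn}(b_{MNij})$ for $i\le M,\,j\le N$ and $0$ elsewhere) and to combine (\ref{r1}) with the inclusion--exclusion identity $z_{kl}=b_{mnkl}-b_{mn,k+1,l}-b_{mn,k,l+1}+b_{mn,k+1,l+1}$ together with the existence of the $r$-limits $b_{mnij}\to\alpha_{ij}$ granted by (\ref{r3}), in order to transfer the weighted absolute summability from the block sums back to $(z_{ij})$ itself and conclude $z\in F_1$.
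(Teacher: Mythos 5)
Your reduction to the four-dimensional matrix $B=(b_{mnij})$ with $b_{mnij}=\sum_{k=i}^{m}\sum_{l=j}^{n}z_{kl}$ and the appeal to Lemma \ref{beta-r} is exactly the route the paper takes, and your proof of the inclusion $F_1\cap F_2\cap F_3\subset\{\mathcal{C}_r(\Delta)\}^{\beta(r)}$ (triangle inequality for (\ref{r1}), absolute convergence of the tail blocks for (\ref{r2})--(\ref{r3}), and the re-summation identities for (\ref{r4})--(\ref{r5})) is sound. The genuine gap is the one you yourself flag: extracting $F_1$ from the necessity of (\ref{r1})--(\ref{r3}). Your proposed repair does not close it. The identity $z_{kl}=b_{mnkl}-b_{mn,k+1,l}-b_{mn,k,l+1}+b_{mn,k+1,l+1}$ combined with (\ref{r1}) yields only $\sum_{k,l}|z_{kl}|\le 4\sup_{m,n}\sum_{i,j}|b_{mnij}|<\infty$, i.e.\ $z\in\mathcal{L}_u$ with no weights, and testing against the finitely supported sign selectors $y_{ij}=\mathrm{sgn}(b_{MNij})$ only reproduces information about $\sum_{i,j}|b_{MNij}|$, which is (\ref{r1}) again, not about $\sum_{i,j}(i+1)(j+1)|z_{ij}|$.

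Moreover, no argument can close this gap, because the implication is false: (\ref{r1}) controls the moduli of the block sums $\sum_{p\ge i,\ q\ge j}z_{pq}$, and cancellation can hide mass. Take $\alpha_{2k}=(k+1)^{-3/2}$, $\alpha_{2k+1}=-(k+1)^{-3/2}$ and set $z_{ij}=\alpha_i\alpha_j$. Every partial tail $\sum_{p=i}^{m}\alpha_p$ collapses to at most two unpaired terms of size $O((i+1)^{-3/2})+O((m+1)^{-3/2})$, so $\sup_m\sum_{i=0}^{m}\big|\sum_{p=i}^{m}\alpha_p\big|<\infty$ and $\sum_{p=0}^{m}(p+1)\alpha_p$ converges; since everything factorizes, $B$ satisfies (\ref{r1})--(\ref{r5}) and hence $z\in\{\mathcal{C}_r(\Delta)\}^{\beta(r)}$, yet $\sum_i(i+1)|\alpha_i|\sim\sum_k k^{-1/2}=\infty$, so $z\notin F_1$. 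You should know that the paper's own proof commits the same error at the same spot: it displays the estimate $\sum_{i,j}|b_{mnij}|\le\sum_{i,j}(i+1)(j+1)|a_{ij}|$, which proves only $F_1\Rightarrow(\ref{r1})$, and then asserts the converse without justification. So your write-up is no worse than the published argument, but the step you isolate as the main obstacle is not merely hard --- it is the point at which the stated theorem fails, and a correct description of the dual must be phrased in terms of the block sums $\sum_{p\ge i,\ q\ge j}a_{pq}$ themselves rather than by $F_1$.
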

\begin{proof}
Let $x=(x_{ij})\in\mathcal{C}_r(\Delta)$. Then, there exists a
sequence  $y=(y_{mn})\in\mathcal{C}_r$. Consider the following
equality
\begin{eqnarray*}
z_{mn}&=&\sum_{i=0}^{m}\sum_{j=0}^{n}a_{ij}x_{ij}=\sum_{i=0}^{m}\sum_{j=0}^{n}\bigg(\sum_{p=0}^{i}\sum_{q=0}^{j}y_{pq}\bigg)a_{ij}\\
&=&\sum_{i=0}^{m}\sum_{j=0}^{n}\bigg(\sum_{p=i}^{m}\sum_{q=j}^{n}a_{pq}\bigg)y_{ij}\\
&=&\sum_{i=0}^{m}\sum_{j=0}^{n}b_{mnij}y_{ij}=(By)_{ij}
\end{eqnarray*}
for all $m,n\in\mathbb{N}$. Hence we can  define the
four-dimensional matrix $B=(b_{mnij})$ as following
\begin{eqnarray}\label{3.12}
b_{mnij}:=\left\{
\begin{array}{rcl}
\displaystyle\sum_{p=i}^{m}\sum_{q=j}^{n}a_{pq}&, & 0\leq i\leq m, \quad 0\leq j\leq n, \\
\displaystyle 0&,  &\textrm{otherwise}. \\
\end{array}
\right.
\end{eqnarray}

Thus we see that $ax=(a_{mn}x_{mn})\in\mathcal{CS}_r$ whenever
$x=(x_{mn})\in\mathcal{C}_r(\Delta)$ if and only  if
$z=(z_{mn})\in\mathcal{C}_r$ whenever $y=(y_{mn})\in\mathcal{C}_r$.
This means that $a=(a_{mn})\in
\{\mathcal{C}_{r}(\Delta)\}^{\beta(r)}$ if and only if  $B\in
(\mathcal{C}_r:\mathcal{C}_r)$. Therefore, we consider the following
equality and equation
\begin{eqnarray}
\sup_{m,n\in\mathbb{N}}\sum_{i=0}^{m}\sum_{j=0}^{n}|b_{mnij}|\nonumber
&\leq&\sup_{m,n\in\mathbb{N}}\sum_{i=0}^{m}\sum_{j=0}^{n}\bigg(\sum_{p=i}^{m}\sum_{q=j}^{n}|a_{pq}|\bigg)\nonumber\\
&=&\sup_{m,n\in\mathbb{N}}\sum_{i=0}^{m}\sum_{j=0}^{n}\bigg(\sum_{p=0}^{i}\sum_{q=0}^{j}|a_{ij}|\bigg)\nonumber\\
&=&\sup_{m,n\in\mathbb{N}}\sum_{i=0}^{m}\sum_{j=0}^{n}(i+1)(j+1)|a_{ij}|\label{pr1},
\end{eqnarray}
\begin{eqnarray}
r-\lim_{m,n\rightarrow\infty}\sum_{i,j}b_{mnij}\nonumber
&=&r-\lim_{m,n\rightarrow\infty}\sum_{i=0}^{m}\sum_{j=0}^{n}\bigg(\sum_{p=0}^{i}\sum_{q=0}^{j}a_{pq}\bigg)\\
&=&\sum_{i,j}\bigg(\sum_{p=0}^{i}\sum_{q=0}^{j}a_{pq}\bigg).
\end{eqnarray}
Then, we derive from the condition (\ref{r1})-(\ref{r3}) that
\begin{equation}
\sum_{i,j}(i+1)(j+1)|a_{ij}|<\infty.
\end{equation}
Further, from Lemma \ref{beta-r} conditions (\ref{r4}) and
(\ref{r5}),
\begin{equation}
r-\lim_{m,n\rightarrow\infty}\sum_{i}^{m}b_{mnij_0}=r-\lim_{m,n\rightarrow\infty}\sum_{i}^{m}\sum_{p=i}^{m}\sum_{q=j_0}^{n}a_{pq}
\end{equation}
exists for each fixed  $j_0\in\mathbb{N}$ and
\begin{equation}
r-\lim_{m,n\rightarrow\infty}\sum_{j}^{n}b_{mni_0j}=r-\lim_{m,n\rightarrow\infty}\sum_{i}^{m}\sum_{p=i_0}^{m}\sum_{q=j}^{n}a_{pq}
\end{equation}
exists for each fixed  $i_0\in\mathbb{N}$. This show that
$\{\mathcal{C}_r(\Delta)\}^{\beta(r)}=F_1\cap F_2\cap F_3$ which
completes the proof.
\end{proof}

Now, we may give our theorem exhibiting the $\beta(\vartheta)$-dual
of the series space $\mathcal{C}_\eta(\Delta)$ in the case
$\eta,\vartheta\in\{p, bp, r\}$, without proof.
\begin{thm}
$\{\mathcal{C}_\eta(\Delta)\}^{\beta(\vartheta)}=\{a=(a_{mn})\in\Omega:
B=(b_{mnij})\in(\mathcal{C_\eta:\mathcal{C_\vartheta}})\}$ where
$B=(b_{mnij})$ is defined by (\ref{3.12}).
\end{thm}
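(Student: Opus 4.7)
The plan is to run exactly the argument given for the $\eta=\vartheta=r$ case, but abstractly: replace the appeal to Lemma \ref{beta-r} by the appropriate characterization of $(\mathcal{C}_\eta:\mathcal{C}_\vartheta)$ given in Lemmas \ref{beta-r}--\ref{beta-p}, and never actually unfold those conditions. The key point is that the definition of $\beta(\vartheta)$-dual turns an abstract convergence requirement on the partial sums of $\sum_{i,j}a_{ij}x_{ij}$ into a mapping condition on an associated four-dimensional matrix $B$.

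First I would fix $a=(a_{mn})\in\Omega$ and, for $x=(x_{ij})\in\mathcal{C}_\eta(\Delta)$, let $y=\Delta x$. By Theorem \ref{t2.2} the correspondence $x\mapsto y$ is a linear bijection from $\mathcal{C}_\eta(\Delta)$ onto $\mathcal{C}_\eta$, and the inverse is given coordinatewise by $x_{ij}=\sum_{p=0}^{i}\sum_{q=0}^{j}y_{pq}$. I would then write the partial sum
\begin{equation*}
z_{mn}=\sum_{i=0}^{m}\sum_{j=0}^{n}a_{ij}x_{ij}=\sum_{i=0}^{m}\sum_{j=0}^{n}a_{ij}\sum_{p=0}^{i}\sum_{q=0}^{j}y_{pq}
\end{equation*}
and exchange the order of summation in the finite sum to obtain $z_{mn}=\sum_{i=0}^{m}\sum_{j=0}^{n}b_{mnij}y_{ij}=(By)_{mn}$, with $b_{mnij}$ given exactly by formula (\ref{3.12}). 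Because the sums are finite there is no legitimacy issue for the exchange.

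The crux is then the translation: $a\in\{\mathcal{C}_\eta(\Delta)\}^{\beta(\vartheta)}$ means that $\vartheta-\sum_{i,j}a_{ij}x_{ij}$ exists for every $x\in\mathcal{C}_\eta(\Delta)$; by definition this is the statement that the sequence of partial sums $(z_{mn})_{m,n\in\mathbb{N}}$ belongs to $\mathcal{C}_\vartheta$. Since $x\mapsto y$ is a bijection between $\mathcal{C}_\eta(\Delta)$ and $\mathcal{C}_\eta$ and $z=By$, this is equivalent to saying that $By\in\mathcal{C}_\vartheta$ for every $y\in\mathcal{C}_\eta$, i.e.\ $B=(b_{mnij})\in(\mathcal{C}_\eta:\mathcal{C}_\vartheta)$. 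This is exactly the claimed equality of sets.

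The main obstacle, such as it is, lies in book-keeping: one must be sure that the $\beta(\vartheta)$-convergence condition really coincides with the $\vartheta$-convergence of the double sequence $(z_{mn})$ (which is immediate from the definitions of $v-\sum$ and of the $\beta(v)$-dual given in the introduction), and that the mapping condition $B\in(\mathcal{C}_\eta:\mathcal{C}_\vartheta)$ automatically includes the existence of the entries $(By)_{mn}$, which holds here because for each fixed $m,n$ the sum defining $(By)_{mn}$ is finite. Once these two points are recorded, the theorem follows without needing to spell out the nine possible sets of conditions on $B$ for $(\eta,\vartheta)\in\{p,bp,r\}^2$; those can be read off case by case from Lemmas \ref{beta-r}, \ref{beta-bp} and \ref{beta-p} if an explicit description is desired.
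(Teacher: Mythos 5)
Your argument is correct and is essentially the approach the paper intends: the paper states this theorem without proof precisely because it is the direct generalization of the preceding proof for $\{\mathcal{C}_r(\Delta)\}^{\beta(r)}$, where the same partial-sum identity $z_{mn}=(By)_{mn}$ with $B$ as in (\ref{3.12}) is derived and the question is reduced to $B\in(\mathcal{C}_\eta:\mathcal{C}_\vartheta)$ via the isomorphism of Theorem \ref{t2.2}. Your two book-keeping remarks (that $\beta(\vartheta)$-membership is by definition $\vartheta$-convergence of the double sequence of rectangular partial sums, and that existence of $(By)_{mn}$ is automatic because each entry is a finite sum) are exactly the points that make the abstract statement go through without unfolding the conditions of Lemmas \ref{beta-r}--\ref{beta-p}.
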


\section{Characterization of some classes of four dimensional matrices}

In the present section, we characterize the matrix transformations
from the space $\mathcal{C}_{r}(\Delta)$ to the double sequence
space $\mathcal{C}_{\vartheta}$ . Although the theorem
characterizing the class $(\mu:\mathcal{C}_{\vartheta}(\Delta))$ are
stated and proved, the necessary and sufficient conditions on a four
dimensional matrix belonging to the classes
$(\mathcal{C}_{r}:\mathcal{C}_{r}(\Delta))$ and
$(\mathcal{C}_{bp}:\mathcal{C}_{bp}(\Delta))$ also given without
proof.

\begin{thm}\label{t4.1}
$A=(a_{mnkl})\in (\mathcal{C}_{r}(\Delta):\mathcal{C}_{\vartheta})$
if and only if the following conditions hold:
\begin{eqnarray}
\sup_{m,n}\sum_{k,l} \bigg|\sum_{p=k}^{\infty}\sum_{q=l}^{\infty}a_{mnpq}\bigg|<\infty, \label{s1}\\
\vartheta-\lim_{s,t\rightarrow \infty}
\sum_{i=0}^{s}\sum_{p=i}^{s}\sum_{p=j_{0}}^{t}a_{mnpq}-\textrm{exists
for fixed} \ j_{0},\\
\vartheta-\lim_{s,t\rightarrow \infty}
\sum_{j=0}^{t}\sum_{p=i_0}^{s}\sum_{p=j}^{t}a_{mnpq}-\textrm{exists
for fixed} \ i_{0},\\
\vartheta-\lim_{m,n} \sum_{p=k}^{\infty}\sum_{q=l}^{\infty}a_{mnpq}=a_{kl}\  \textrm{for all} \ k,l\in \mathbb{N},\\
\exists u^{l_{0}} \in \mathbb{C}\ni \vartheta-\lim_{m,n}\sum_{k}\sum_{p=k}^{\infty}\sum_{q=l_0}^{\infty}a_{mnpq}=u^{l_{0}} \quad\textrm{for fixed}\ l_{0}\in \mathbb{N},\\
\exists v_{k_{0}} \in \mathbb{C}\ni \vartheta-\lim_{m,n}\sum_{l}\sum_{p=k_{0}}^{\infty}\sum_{q=l}^{\infty}a_{mnpq}=v_{k_{0}} \quad \textrm{for fixed}\ k_{0}\in \mathbb{N} ,\\
\exists v\in \mathbb{C}\ni \vartheta-\lim_{m,n}
\sum_{k,l}\sum_{p=k}^{\infty}\sum_{q=l}^{\infty}a_{mnpq}=v
\label{s7}.
\end{eqnarray}
\end{thm}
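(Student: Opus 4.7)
The plan is to exploit the isomorphism $T:\mathcal{C}_{r}(\Delta)\to \mathcal{C}_{r}$ from Theorem \ref{t2.2}, which sends $x=(x_{kl})$ to $y=\Delta x$ with inverse $x_{kl}=\sum_{p=0}^{k}\sum_{q=0}^{l} y_{pq}$, and to reduce the problem to the already--known class $(\mathcal{C}_{r}:\mathcal{C}_{\vartheta})$ characterized by Lemma \ref{beta-r}. Formally, for any $x\in \mathcal{C}_{r}(\Delta)$ corresponding to $y\in \mathcal{C}_{r}$, interchanging the order of summation gives
\begin{equation*}
(Ax)_{mn}=\sum_{k,l}a_{mnkl}x_{kl}=\sum_{k,l}a_{mnkl}\sum_{p=0}^{k}\sum_{q=0}^{l}y_{pq}=\sum_{p,q}b_{mnpq}\,y_{pq},
\end{equation*}
where $b_{mnpq}:=\sum_{k=p}^{\infty}\sum_{l=q}^{\infty}a_{mnkl}$. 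Thus $A\in (\mathcal{C}_{r}(\Delta):\mathcal{C}_{\vartheta})$ if and only if the four-dimensional matrix $B=(b_{mnpq})$ belongs to $(\mathcal{C}_{r}:\mathcal{C}_{\vartheta})$; this reduction is strictly parallel to the one used in the preceding $\beta(r)$-dual theorem, except that now each row of $B$ is obtained by an infinite (rather than finite) summation of entries of $A$.

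Applying Lemma \ref{beta-r} to $B$ and rewriting each of its conditions in terms of $A$ yields the bulk of the characterization: condition (\ref{r1}) translates to (\ref{s1}); condition (\ref{r2}) to (\ref{s7}); condition (\ref{r3}) to (\ref{s4}); and conditions (\ref{r4}) and (\ref{r5}) to (\ref{s5}) and (\ref{s6}) respectively. The remaining hypotheses (\ref{s2}) and (\ref{s3}) are exactly the well--posedness requirements guaranteeing that the iterated sums $\sum_{i}b_{mnij_{0}}$ and $\sum_{j}b_{mni_{0}j}$ exist in the first place. Indeed, the truncated expression $\sum_{i=0}^{s}\sum_{p=i}^{s}\sum_{q=j_{0}}^{t}a_{mnpq}$ appearing in (\ref{s2}) is nothing but $\sum_{i=0}^{s}$ of the finite $(s,t)$-truncation of $b_{mnij_{0}}$, so its $\vartheta$-limit as $s,t\to\infty$ is precisely what makes (\ref{s5}) meaningful; condition (\ref{s3}) plays the analogous role for (\ref{s6}).

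The main technical obstacle is therefore twofold. First, one must rigorously justify the interchange of summation $\sum_{k,l}a_{mnkl}\sum_{p\leq k,\,q\leq l}y_{pq}=\sum_{p,q}y_{pq}\sum_{k\geq p,\,l\geq q}a_{mnkl}$ for every $y\in \mathcal{C}_{r}$, which uses the absolute summability encoded by (\ref{s1}) together with the existence statements (\ref{s2})--(\ref{s3}). Second, one must verify that (\ref{s2}) and (\ref{s3}) are necessary as well as sufficient; this is accomplished by applying the given matrix $A$ to the preimages under $T^{-1}$ of suitable test sequences in $\mathcal{C}_{r}$, in particular to sequences supported on a single row or a single column so as to isolate the relevant partial sums. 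Once these two points are dispatched, the remaining conditions transfer mechanically from Lemma \ref{beta-r}, completing the characterization in complete analogy with the derivation of the $\beta(r)$-dual theorem just established.
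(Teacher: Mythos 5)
Your proposal follows essentially the same route as the paper: both apply the Abel-type substitution $x_{kl}=\sum_{p=0}^{k}\sum_{q=0}^{l}y_{pq}$ to rewrite $(Ax)_{mn}$ as $(B_{mn}y)$ with $b_{mnkl}=\sum_{p=k}^{\infty}\sum_{q=l}^{\infty}a_{mnpq}$, reduce membership in $(\mathcal{C}_{r}(\Delta):\mathcal{C}_{\vartheta})$ to $B\in(\mathcal{C}_{r}:\mathcal{C}_{\vartheta})$, and read conditions (\ref{s1})--(\ref{s7}) off Lemma \ref{beta-r}, with the two ``existence'' conditions arising exactly as you describe from requiring the rectangular partial sums (the truncated matrices $B_{mn}=(b_{mnjk}^{[s,t]})$ in the paper) to converge for each fixed $m,n$. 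The technical points you flag --- justifying the interchange of summation and the necessity of the existence conditions --- are treated no more explicitly in the paper's proof than in your sketch.
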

\begin{proof}
Let us take any $x=(x_{mn})\in \mathcal{C}_{r}(\Delta)$ and define
the sequence $y=(y_{kl})$ by
$$
y_{kl}=x_{kl}-x_{k-1,l}-x_{k,l-1}+x_{k-1,l-1}; \quad (k,l\in
\mathbb{N}).
$$
Then, $y=(y_{kl})\in \mathcal{C}_{r}$ by Theorem \ref{t2.2}. Now,
for the $(s,t)$th rectangular partial sum of the series $\sum_{j,k}
a_{mnjk}x_{jk}$, we derive that
\begin{eqnarray}\label{4.1}
(Ax)_{mn}^{[s,t]}&=&\sum_{j=0}^{s}\sum_{k=0}^{t}a_{mnjk}x_{jk} \nonumber\\
&=&\sum_{j=0}^{s}\sum_{k=0}^{t}
\bigg(\sum_{p=0}^{j}\sum_{q=0}^{k}y_{pq}\bigg)a_{mnjk}\nonumber\\
&=&\sum_{j=0}^{s}\sum_{k=0}^{t}
\bigg(\sum_{p=j}^{s}\sum_{q=k}^{t}a_{mnpq}\bigg)y_{jk}
\end{eqnarray}
for all $m,n,s,t\in \mathbb{N}$. Define the matrix
$B_{mn}=(b_{mnjk}^{[s,t]})$ by
\begin{eqnarray}
b_{mnjk}^{[s,t]}:=\left\{
\begin{array}{rcl}
\displaystyle\sum_{p=j}^{s}\sum_{q=k}^{t}a_{mnpq}&, & 0\leq j\leq s, \quad 0\leq k\leq t, \\
\displaystyle 0&,  &\textrm{otherwise}. \\
\end{array}
\right.
\end{eqnarray}
Then, the equality (\ref{4.1}) may be rewritten as
\begin{equation}\label{4.3}
(Ax)_{mn}^{[s,t]}=(B_{mn}y)_{[s,t]}.
\end{equation}
Then, the convergence of the rectangular partial sums
$(Ax)_{mn}^{[s,t]}$ in the regular sense for all $m,n\in \mathbb{N}$
and for all $x\in \mathcal{C}_{r}(\Delta)$ is equivalent of saying
that $B_{mn}\in (\mathcal{C}_{r}:\mathcal{C}_{\vartheta})$. Hence,
the following conditions

\begin{eqnarray}
\sum_{k,l}(k+1)(l+1)|a_{mnkl}|<\infty, \label{d1}\\
\vartheta-\lim_{s,t\rightarrow \infty}
\sum_{i=0}^{s}\sum_{p=i}^{s}\sum_{p=j_{0}}^{t}a_{mnpq}-\textrm{exists
for fixed} \ j_{0},\\
\vartheta-\lim_{s,t\rightarrow \infty}
\sum_{j=0}^{t}\sum_{p=i_0}^{s}\sum_{p=j}^{t}a_{mnpq}-\textrm{exists
for fixed} \ i_{0}
\end{eqnarray}
must be satisfied for every fixed  $m,n\in \mathbb{N}$. In this
case,
$$
\vartheta-\lim_{s,t\rightarrow \infty}
b_{mnjk}^{[s,t]}=\sum_{p=j}^{\infty}\sum_{q=k}^{\infty}a_{mnpq},
$$
$$
\vartheta-(Ax)_{mn}^{[s,t]}=r-\lim(B_{mn}y)
$$
hold. Thus, we derive from the two-sided implication "$Ax$ is in
$C_{r}$ whenever $x\in \mathcal{C}_{r}(\Delta)$ if and only if
$B=\big(\sum_{p=j}^{\infty}\sum_{q=k}^{\infty}a_{mnpq}\big)_{mn}\in
(\mathcal{C}_{r}:\mathcal{C}_{\vartheta})$", we have Lemma
\ref{beta-r} that
\begin{eqnarray}
\sup_{m,n}\sum_{k,l} \bigg|\sum_{p=k}^{\infty}\sum_{q=l}^{\infty}a_{mnpq}\bigg|<\infty, \\
\vartheta-\lim_{m,n} \sum_{p=k}^{\infty}\sum_{q=l}^{\infty}a_{mnpq}=a_{kl}\  \textrm{for all} \ k,l\in \mathbb{N},\\
\exists u^{l_{0}} \in \mathbb{C}\ni \vartheta-\lim_{m,n}\sum_{k}\sum_{p=k}^{\infty}\sum_{q=l_0}^{\infty}a_{mnpq}=u^{l_{0}} \quad\textrm{for fixed}\ l_{0}\in \mathbb{N},\\
\exists v_{k_{0}} \in \mathbb{C}\ni \vartheta-\lim_{m,n}\sum_{l}\sum_{p=k_{0}}^{\infty}\sum_{q=l}^{\infty}a_{mnpq}=v_{k_{0}} \quad \textrm{for fixed}\ k_{0}\in \mathbb{N} ,\\
\exists v\in \mathbb{C}\ni \vartheta-\lim_{m,n}
\sum_{k,l}\sum_{p=k}^{\infty}\sum_{q=l}^{\infty}a_{mnpq}=v
\label{d7}.
\end{eqnarray}
Now, from the conditions (\ref{d1})-(\ref{d7}), we have that
$A=(a_{mnkl})\in (\mathcal{C}_{r}(\Delta):\mathcal{C}_{v})$ if and
only if the conditions (\ref{s1})-(\ref{s7}) hold. This completes
the proof.

\end{proof}

\begin{thm}\label{t4.2}
Suppose that the elements of the four dimensional infinite matrices
$E=(e_{mnkl})$ and $F=(f_{mnkl})$ are connected with the relation
\begin{equation}\label{4.4}
f_{mnkl}=\sum_{i=m-1}^{m}\sum_{j=n-1}^{n}(-1)^{m+n-i-j}e_{ijkl}
\end{equation}
for all $k,l,m,n\in \mathbb{N}$ and $\mu$ be any given space of
double sequences. Then, $E\in (\mu:\mathcal{C}_{\vartheta}(\Delta))$
if and only if $F\in (\mu:\mathcal{C}_{\vartheta})$.
\end{thm}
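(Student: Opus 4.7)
The plan is to reduce the theorem to the pointwise identity $\Delta(Ex)=Fx$, evaluated coordinate by coordinate in the $(m,n)$ indices. First, I would expand the hypothesis on $F$: for all $k,l,m,n\in\mathbb{N}$,
\[
f_{mnkl}=e_{mnkl}-e_{m,n-1,kl}-e_{m-1,n,kl}+e_{m-1,n-1,kl},
\]
with the usual convention that entries carrying a $-1$ index vanish. In other words, $F$ is obtained from $E$ by applying the double difference operator $\Delta$ to the leading pair of indices of each row of $E$.

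Next, for any $x=(x_{kl})\in\mu$ for which $Ex$ exists, I would verify directly
\[
(\Delta(Ex))_{mn}=(Ex)_{mn}-(Ex)_{m,n-1}-(Ex)_{m-1,n}+(Ex)_{m-1,n-1}=\sum_{k,l}f_{mnkl}x_{kl}=(Fx)_{mn},
\]
where the interchange of the four-term finite sum in the row indices with the infinite $(k,l)$-sum is trivially legitimate, and the middle equality is exactly the hypothesis. Thus $Fx=\Delta(Ex)$ whenever $Ex$ is defined.

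Third, I would invoke the definition of $\mathcal{C}_{\vartheta}(\Delta)$ as the $\Delta$-domain of $\mathcal{C}_{\vartheta}$: a double sequence $z$ belongs to $\mathcal{C}_{\vartheta}(\Delta)$ precisely when $\Delta z\in\mathcal{C}_{\vartheta}$. Applying this to $z=Ex$ gives
\[
Ex\in\mathcal{C}_{\vartheta}(\Delta)\quad\Longleftrightarrow\quad Fx=\Delta(Ex)\in\mathcal{C}_{\vartheta}.
\]
Ranging $x$ over $\mu$ yields the claimed equivalence $E\in(\mu:\mathcal{C}_{\vartheta}(\Delta))\Longleftrightarrow F\in(\mu:\mathcal{C}_{\vartheta})$.

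The main obstacle is really a bookkeeping issue about existence of the transforms rather than the algebra itself. The forward direction is automatic, since if $Ex$ exists then $Fx=\Delta(Ex)$ exists as a finite linear combination of four already-defined entries. The reverse direction requires that $Ex$ itself be well-defined for each $x\in\mu$ when only the existence of $Fx$ is assumed; this is settled by appealing to the inverse relation of Theorem~\ref{t2.2}, namely $\Delta^{-1}=S$, which recovers $Ex$ coordinatewise from $Fx$ via $(Ex)_{mn}=\sum_{i=0}^{m}\sum_{j=0}^{n}(Fx)_{ij}$. Once this existence point is dispatched, the remainder of the argument is pure symbol pushing through the definitions.
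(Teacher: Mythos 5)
Your proof is correct and takes essentially the same route as the paper's: both arguments reduce the theorem to the single identity $Fx=\Delta(Ex)$ (the paper obtains it by writing the finite rectangular partial sums and letting $s,t\to\infty$, you by interchanging the four-term difference in the row indices with the $(k,l)$-sum) and then appeal to the definition of $\mathcal{C}_{\vartheta}(\Delta)$ as the matrix domain of $\Delta$ in $\mathcal{C}_{\vartheta}$. Your closing remark about recovering $Ex$ from $Fx$ through the inverse matrix $S$ tidies up an existence point that the paper passes over silently, but it does not alter the structure of the argument.
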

\begin{proof}
Let $x=(x_{kl})\in \mu$ and consider the following equality with
(\ref{4.4})
\begin{equation}\label{4.5}
\sum_{i=m-1}^{m}\sum_{j=n-1}^{n}\sum_{k=s-1}^{s}\sum_{l=t-1}^{t}(-1)^{m+n-i-j}e_{ijkl}x_{kl}=\sum_{k=s-1}^{s}\sum_{l=t-1}^{t}f_{mnkl}x_{kl}
\end{equation}
for all $m,n,s,t\in \mathbb{N}$. By letting $s,t\rightarrow \infty$
in (\ref{4.5}) one can derive that
\begin{equation}\label{4.6}
\sum_{i=m-1}^{m}\sum_{j=n-1}^{n}(-1)^{m+n-i-j}(Ex)_{ij}=(Fx)_{mn}
\quad \textrm{for all} \quad m,n\in \mathbb{N}.
\end{equation}
Therefore, it is seen by (\ref{4.6}) that $Ex\in
\mathcal{C}_{\vartheta}(\Delta)$ if and only if $Fx\in
\mathcal{C}_{\vartheta}$ whenever $x\in \mu$. This step completes
the proof.
\end{proof}

Of course, Theorem \ref{t4.2} has several consequences depending on
the choice  of the sequence space $\mu$. Prior to giving some
results as an application of this idea, we need the following
lemmas:

\begin{lem}\cite{hjh,gmr,mz2}\label{l4.3}
$A=(a_{mnkl})\in (\mathcal{C}_{r}:\mathcal{C}_{r})$ if and only if
\begin{eqnarray}
\sup_{m,n\in \mathbb{N}}\sum_{k,l}|a_{mnkl}|<\infty, \label{4.7}\\
\exists~(a_{kl})\in\Omega\ni
r-\lim_{m,n\rightarrow\infty}a_{mnkl}=a_{kl}\quad\textrm{for
each}~\textrm{k,l}\in\mathbb{N}, \label{4.8}\\
\exists\ v\in \mathbb{C}\ni r-\lim_{m,n\rightarrow \infty}
\sum_{k,l} a_{mnkl}=v, \label{4.9}\\
\exists\ u^{l_{0}},v_{k_{0}}\in \mathbb{C}\ni r-\lim_{m,n\rightarrow
\infty} \sum_{k} a_{mnkl_{0}}=u^{l_{0}} \ \textrm{and} \label{4.10}\\
r-\lim_{m,n\rightarrow \infty} \sum_{l} a_{mnk_{0}l}=v_{k_{0}} \
\textrm{for any} \ k_{0},l_{0}\in \mathbb{N}. \nonumber
\end{eqnarray}
\end{lem}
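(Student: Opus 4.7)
The plan is to prove the biconditional by a Silverman--Toeplitz-type argument tailored to regular convergence of double sequences.

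For \emph{necessity}, I would feed $A$ a family of test sequences in $\mathcal{C}_{r}$ and read the required limits off $Ax\in\mathcal{C}_{r}$: the constant sequence $x_{kl}\equiv 1$ forces (\ref{4.9}); the unit double sequence with a single $1$ at coordinate $(k_{0},l_{0})$ forces (\ref{4.8}); the indicators of a single row and of a single column (both of which lie in $\mathcal{C}_{r}$, with only one non-vanishing marginal limit) force (\ref{4.10}). The uniform $\ell_{1}$-bound (\ref{4.7}) follows from the Banach--Steinhaus theorem applied to the continuous linear functionals $x\mapsto(Ax)_{mn}$ on the $BDK$-space $\mathcal{C}_{r}$, supplemented by a gliding-hump construction to rule out unbounded rows.

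For \emph{sufficiency}, I would exploit the canonical decomposition of every $x\in\mathcal{C}_{r}$. Setting $L=r-\lim x_{kl}$, $r_{k}=\lim_{l}x_{kl}$ and $c_{l}=\lim_{k}x_{kl}$, one writes
\begin{equation*}
x_{kl} = L + (r_{k}-L) + (c_{l}-L) + z_{kl}, \qquad z_{kl} := x_{kl}-r_{k}-c_{l}+L,
\end{equation*}
where the remainder $z$ is doubly null: $\lim_{k}z_{kl}=0$ for each fixed $l$, $\lim_{l}z_{kl}=0$ for each fixed $k$, and $r-\lim z_{kl}=0$. Substituting this into $(Ax)_{mn}=\sum_{k,l}a_{mnkl}x_{kl}$ breaks it into a constant piece driven by (\ref{4.9}), two marginal pieces of the form $\sum_{k}(r_{k}-L)\sum_{l}a_{mnkl}$ and $\sum_{l}(c_{l}-L)\sum_{k}a_{mnkl}$ driven by (\ref{4.10}) together with a one-dimensional Silverman--Toeplitz step on the bounded null sequences $(r_{k}-L)$ and $(c_{l}-L)$, and a remainder $R_{mn}=\sum_{k,l}a_{mnkl}z_{kl}$; the interchanges of summation used here are legal by (\ref{4.7}).

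The principal obstacle is showing $r-\lim R_{mn}=0$. I plan a standard $\varepsilon$-truncation: given $\varepsilon>0$, use the doubly-null property of $z$ to choose $K,L$ so that $|z_{kl}|<\varepsilon$ whenever $k>K$ or $l>L$, then split $R_{mn}$ into the finite block $\{k\leq K,\,l\leq L\}$, the two edge strips, and the tail; the finite block vanishes in $m,n$ by the pointwise limits (\ref{4.8}), the tail is dominated by $\varepsilon\cdot\sup_{m,n}\sum_{k,l}|a_{mnkl}|$ via (\ref{4.7}), and the two strips are absorbed using (\ref{4.7}) together with (\ref{4.10}) applied in the appropriate single index. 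The delicate point is that regular convergence forces these estimates to remain valid not only as $m,n\to\infty$ jointly but also as one index tends to infinity with the other fixed; this is precisely why conditions (\ref{4.10}) appear as single-index row and column sums rather than double sums, and is where the regular case strictly strengthens the bounded-Pringsheim one. Assembling the four pieces then yields a definite value for $r-\lim(Ax)_{mn}$ and closes the equivalence.
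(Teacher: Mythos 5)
The paper does not prove this lemma at all: it is quoted verbatim from Hamilton, Robison and Zeltser, so there is no in-paper argument to compare against. Your sketch is essentially the classical Robison--Hamilton proof of the characterization of $(\mathcal{C}_{r}:\mathcal{C}_{r})$: test sequences (constants, unit sequences, row and column indicators, all of which do lie in $\mathcal{C}_{r}$) plus Banach--Steinhaus on the $BDK$-space $\mathcal{C}_{r}$ for necessity, and the four-term decomposition $x_{kl}=L+(r_{k}-L)+(c_{l}-L)+z_{kl}$ for sufficiency. The decomposition is algebraically correct, the remainder $z$ is indeed regularly null, and your observation that regular nullity (as opposed to mere bounded Pringsheim nullity) is what lets you choose $K,L$ with $|z_{kl}|<\varepsilon$ on the whole complement $\{k>K\}\cup\{l>L\}$ of a finite block is exactly the point where the $r$-case differs from the $bp$-case. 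The interchanges of summation are justified by (\ref{4.7}), and the marginal pieces are handled correctly by the one-dimensional $c_{0}\to c$ argument using (\ref{4.7}) and (\ref{4.10}).

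One assertion is wrong as stated, though it is harmless to the conclusion: you claim the principal obstacle is showing $r-\lim R_{mn}=0$ and that ``the finite block vanishes in $m,n$ by the pointwise limits (\ref{4.8})''. The finite block does not vanish; by (\ref{4.8}) it converges to $\sum_{k\leq K,\,l\leq L}a_{kl}z_{kl}$, and the truncation argument you describe actually shows $r-\lim R_{mn}=\sum_{k,l}a_{kl}z_{kl}$ (absolutely convergent since $\sum_{k,l}|a_{kl}|\leq\sup_{m,n}\sum_{k,l}|a_{mnkl}|$ by Fatou). This quantity is zero only when the limit matrix $(a_{kl})$ vanishes, i.e.\ for the regular subclass $(\mathcal{C}_{r}:\mathcal{C}_{r};p)$, whereas the lemma characterizes the conservative class. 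Since the lemma only asserts existence of the regular limit of $Ax$, not its value, your scheme still closes once you replace ``$R_{mn}\to 0$'' by ``$R_{mn}$ converges to $\sum_{k,l}a_{kl}z_{kl}$''; but as written that step would fail.
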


\begin{lem}\cite{hjh,gmr,mz2}\label{l4.4}
$A=(a_{mnkl})\in (\mathcal{C}_{bp}:\mathcal{C}_{bp})$ if and only if
\begin{eqnarray}
\sup_{m,n\in \mathbb{N}}\sum_{k,l}|a_{mnkl}|<\infty, \label{4.11}\\
\exists~(a_{kl})\in\Omega\ni
bp-\lim_{m,n\rightarrow\infty}a_{mnkl}=a_{kl}\quad\textrm{for
each}~\textrm{k,l}\in\mathbb{N},\label{4.12}\\
\exists\ v\in \mathbb{C}\ni bp-\lim_{m,n\rightarrow \infty}
\sum_{k,l} a_{mnkl}=v,\label{4.13}\\
bp-\lim_{m,n\rightarrow \infty} \sum_{k} |a_{mnkl_{0}}-a_{kl_{0}}|=0
\quad \textrm {and} \label{4.14}\\
bp-\lim_{m,n\rightarrow \infty} \sum_{l} |a_{mnk_{0}l}-a_{k_{0}l}|=0
\quad \textrm{for any}\quad  k_{0},l_{0}\in \mathbb{N}. \nonumber
\end{eqnarray}
\end{lem}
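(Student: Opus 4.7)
The lemma is the $\mathcal{C}_{bp}$-version of the classical Silverman--Toeplitz characterization for four-dimensional matrices, so I would prove it in the two standard directions, leveraging the fact (noted earlier in the paper) that $\mathcal{C}_{bp}$ endowed with $\|\cdot\|_\infty$ is a Banach space.

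\textbf{Necessity.} Assume $A\in(\mathcal{C}_{bp}:\mathcal{C}_{bp})$. For each fixed $(m,n)$ the map $x\mapsto(Ax)_{mn}$ is a continuous linear functional on $\mathcal{C}_{bp}$ with norm $\sum_{k,l}|a_{mnkl}|$, so \eqref{4.11} follows from the uniform boundedness principle applied to this family. The remaining conditions fall out by feeding in particular test sequences from $\mathcal{C}_{bp}$: the indicator $e^{(k_0,l_0)}$ produces \eqref{4.12}, the constant sequence $\mathbf{1}$ produces \eqref{4.13}, and bounded sequences supported on a single column $l=l_0$ (respectively a single row $k=k_0$) force \eqref{4.14} and its companion condition via a gliding-hump argument modelled on Schur's theorem for single sequences.

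\textbf{Sufficiency.} Given $x\in\mathcal{C}_{bp}$ with $bp-\lim x=\xi$, write $x=\xi\mathbf{1}+u$ where $u$ is bounded and $bp-\lim u=0$. Then
\[
(Ax)_{mn}=\xi\sum_{k,l}a_{mnkl}+\sum_{k,l}a_{mnkl}u_{kl},
\]
and the first summand converges in the $bp$ sense to $\xi v$ by \eqref{4.13}. For the second, given $\varepsilon>0$ choose $K,L$ so that $|u_{kl}|<\varepsilon$ whenever $k\geq K$ and $l\geq L$, and decompose the sum over four regions: the finite corner $k<K,\,l<L$ (handled by the pointwise limits \eqref{4.12}), the far quadrant $k\geq K,\,l\geq L$ (bounded by $\varepsilon\sup_{m,n}\sum_{k,l}|a_{mnkl}|$, finite by \eqref{4.11}), and the two strips $k\geq K,\,l<L$ and $k<K,\,l\geq L$. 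The strips, on which $u$ need not be small, are exactly the part controlled by \eqref{4.14} and its companion condition.

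\textbf{Main obstacle.} The strip estimate is the crux of the whole argument: $bp$-convergence of $u$ to $0$ does not force smallness on a tail of the index lattice, only on the upper-right corner, so the cross strips demand uniform row- and column-tail conditions, which is exactly what \eqref{4.14} and its companion provide. These are the genuine double-sequence analogue of $\lim_m\sum_k|a_{mk}-a_k|=0$ from the scalar Silverman--Toeplitz theorem, and the gliding-hump construction required to establish their necessity is the most technically delicate step of the whole proof.
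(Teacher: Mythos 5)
The paper does not actually prove this lemma: it is imported from Hamilton, Robison and Zeltser \cite{hjh,gmr,mz2} and used as a black box, so there is no in-paper argument to measure your proposal against. Judged on its own, your sketch is a correct outline of the classical Robison--Hamilton characterization and is the standard route: necessity of (\ref{4.11}) by uniform boundedness on the Banach space $(\mathcal{C}_{bp},\|\cdot\|_{\infty})$, necessity of (\ref{4.12}) and (\ref{4.13}) from the unit and constant test sequences, and sufficiency via the splitting $x=\xi\mathbf{1}+u$ with the four-region decomposition in which the cross strips are precisely what (\ref{4.14}) and its companion control.

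Two steps are thinner than they should be. First, the necessity of (\ref{4.14}) is the real content of the lemma and you only name it (``a gliding-hump argument modelled on Schur's theorem''): what is required is a double-parameter Schur theorem for the column matrices $(a_{mnkl_0})_{(m,n),k}$ acting on $\ell_{\infty}$, and the hump must be glided along a sequence of pairs $(m_i,n_i)$ with \emph{both} coordinates tending to infinity, so that divergence along that sequence contradicts $bp$-convergence of $Ax$; this needs to be written out, not asserted. Second, two points are passed over silently: (a) in the uniform-boundedness step the individual functionals $x\mapsto (Ax)_{mn}$ are bounded with norm $\sum_{k,l}|a_{mnkl}|$ only because the rows lie in the $\beta(bp)$-dual, where $bp$-convergence of the rectangular partial sums includes their boundedness --- Pringsheim convergence alone would not give the pointwise boundedness needed to apply Banach--Steinhaus to the partial-sum functionals; (b) in the strip estimate, condition (\ref{4.14}) only lets you replace $a_{mnkl_0}$ by $a_{kl_0}$ in the limit, and you still need $\sum_{k}|a_{kl_0}|<\infty$ (which follows from (\ref{4.11}) together with (\ref{4.12})) in order to make the remaining tail $\sum_{k\ge K}|a_{kl_0}|$ small by enlarging $K$. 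With these points supplied the argument closes.
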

\begin{cor}
Suppose that the relation (\ref{4.4}) holds between the elements of
the four dimensional infinite matrices $E=(e_{mnkl})$ and
$F=(f_{mnkl})$. Then, the following statements hold:\\

(i) $E=(e_{mnjk})\in (\mathcal{C}_{r}:\mathcal{C}_{r}(\Delta))$ if
and only if the conditions (\ref{4.7})-(\ref{4.10}) hold with
$f_{mnkl}$ instead of $a_{mnkl}$.

(ii) $E=(e_{mnjk})\in (\mathcal{C}_{bp}:\mathcal{C}_{bp}(\Delta))$
if and only if the conditions (\ref{4.11})-(\ref{4.14}) hold with
$f_{mnkl}$ instead of $a_{mnkl}$.
\end{cor}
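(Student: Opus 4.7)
The plan is to derive both parts of the corollary as straightforward applications of Theorem \ref{t4.2} (the transfer principle relating matrices acting into $\mathcal{C}_\vartheta(\Delta)$ to matrices acting into $\mathcal{C}_\vartheta$), combined with the classical characterizations in Lemmas \ref{l4.3} and \ref{l4.4}.

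For part (i), I would specialize Theorem \ref{t4.2} to the case $\mu=\mathcal{C}_r$ and $\vartheta=r$. Since the hypothesis (\ref{4.4}) is assumed, Theorem \ref{t4.2} gives the two-sided equivalence $E\in(\mathcal{C}_r:\mathcal{C}_r(\Delta))$ if and only if $F\in(\mathcal{C}_r:\mathcal{C}_r)$. I would then invoke Lemma \ref{l4.3}, which characterizes the class $(\mathcal{C}_r:\mathcal{C}_r)$ by the four conditions (\ref{4.7})--(\ref{4.10}) applied to the entries of the matrix. Reading these conditions off for $F=(f_{mnkl})$ in place of $A=(a_{mnkl})$ yields exactly the statement of (i).

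For part (ii), the argument is identical in structure: apply Theorem \ref{t4.2} with $\mu=\mathcal{C}_{bp}$ and $\vartheta=bp$ to reduce $E\in(\mathcal{C}_{bp}:\mathcal{C}_{bp}(\Delta))$ to $F\in(\mathcal{C}_{bp}:\mathcal{C}_{bp})$, and then appeal to Lemma \ref{l4.4}, which characterizes this latter class by (\ref{4.11})--(\ref{4.14}). Rewriting these with $f_{mnkl}$ in place of $a_{mnkl}$ produces the stated conditions.

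Since the corollary is a direct composition of a reduction lemma with two known characterization lemmas, there is essentially no genuine obstacle; the only point worth mentioning is that the equivalence $Ex\in\mathcal{C}_\vartheta(\Delta)\Leftrightarrow Fx\in\mathcal{C}_\vartheta$ must hold uniformly over all $x\in\mu$ (both $\mathcal{C}_r$ and $\mathcal{C}_{bp}$ here), which is precisely what Theorem \ref{t4.2} supplies. So I would simply write two short paragraphs chaining these results together, noting that the $\Delta$-transform appearing in (\ref{4.4}) is exactly the operator converting membership in the image space $\mathcal{C}_\vartheta(\Delta)$ into membership in $\mathcal{C}_\vartheta$, and omitting the routine verifications covered by the cited lemmas.
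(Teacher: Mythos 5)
Your proposal is correct and is exactly the argument the paper intends: the corollary is presented as an immediate consequence of Theorem \ref{t4.2} (with $\mu=\mathcal{C}_{r}$, $\vartheta=r$ for part (i) and $\mu=\mathcal{C}_{bp}$, $\vartheta=bp$ for part (ii)) combined with the characterizations in Lemmas \ref{l4.3} and \ref{l4.4} applied to $F=(f_{mnkl})$. Nothing is missing.
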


\section*{Acknowledgement}
We wish to thank the referees for their valuable suggestions and
comments which improved the paper considerably.

\vskip 1truecm

\vskip 1truecm

\begin{thebibliography}{99}
\bibitem{boos}J. Boss,  Classical and Modern Methods in
Summability, Oxford University Press, Newyork, 2000.

\bibitem{fm} F. M\`{o}ricz, Extensions of the spaces $c$ and $c_{0}$
from single to double sequences, Acta Math. Hungar., \textbf{57}
(1991), 129-136.

\bibitem{zel1} M. Zeltser, Investigation of double sequence spaces
by soft and hard analitic methods, Dissertationes Mathematicae
Universtaties Tartuensis \textbf{25}, Tartu University Press, Univ.
of Tartu, Faculty of Mathematics and Computer Science, Tartu, 2001.

\bibitem{zel2} M. Zeltser, On conservative matrix methods for double
sequence spaces, Acta Math. Hung., \textbf{95}(3) (2002), 225-242.

\bibitem{agrc1} A. Gökhan and R. Çolak, The double sequence spaces
$c_{2}^{P}(p)$ and $c_{2}^{BP}(p)$, Appl. Math. Comput.,
\textbf{157}(2) (2004), 491-501.

\bibitem{agrc2} A. Gökhan and R. Çolak, Double sequence space
$\ell_{2}^{\infty}(p)$, Appl. Math. Comput., \textbf{160} (2005),
147-153.

\bibitem{mme} M. Mursaleen and O.H.H. Edely, Statistical convergence
of double sequences, J. Math. Anal. Appl., \textbf{288}(1) (2003),
223-231.

\bibitem{mm1} M. Mursaleen, Almost strongly regular matrices and a
core theorem for double sequences, J. Math. Anal. Appl.,
\textbf{293}(2) (2004), 523-531.

\bibitem{mme2} M. Mursaleen and O.H.H. Edely, Almost convergence and a core theorem
 for double sequences, J. Math. Anal. Appl., \textbf{293}(2) (2004),
532-540.

\bibitem{bafb1} B. Altay and F. Ba\c{s}ar, Some new spaces of double
sequences, J. Math. Anal. Appl., \textbf{309}(1) (2005), 70-90.

\bibitem{fbys} F. Ba\c{s}ar and Y. Sever, The space
$\mathcal{L}_{q}$ of double sequences, Math. J. Okayama Univ.,
\textbf{51} (2009), 149-157.

\bibitem{bafb2} B. Altay and F. Ba\c{s}ar, Matrix mappings on the
space  $bs(p)$ and its $\alpha-,\beta-$ and $\gamma-$duals, Aligarh
Bull. Math., \textbf{21} (2002), 79-91.

\bibitem{bafb3} B. Altay and F. Ba\c{s}ar, On the space of sequences
of $p-$bounded variation and related matrix mappings, Ukrainian
Math. J., \textbf{55}(1) (2003), 136-147.

\bibitem{hjh} H. J. Hamilton, Transformations of multiple sequences,
Duke Math. J., \textbf{2} (1936), 29-60.

\bibitem{mzmmsam} M. Zeltser, M. Mursaleen and S. A. Mohiuddine, On
almost conservative matrix mathods for double sequence spaces, Publ.
Math. Debrecen, \textbf{75} (2009), 387-399.

\bibitem{gmr} G. M. Robison, Divergent double sequences and series,
Amer. Math. Soc. Trans., \textbf{28} (1926), 50-73.

\bibitem{mz2} M. Zeltser, On conservative matrix methods for double
sequences, Acta Math. Hung., \textbf{95}(3) (2002), 225-242.

\end{thebibliography}
\end{document}